\title{Real Equivariant Bordism for elementary abelian $2$--groups}
\author{Moritz Firsching}
 \address{Institut für Mathematik, FU Berlin\\Arnimallee 2\\14195 Berlin\\ Germany}
 \email{firsching@math.fu-berlin.de}
 \urladdr{http://page.mi.fu-berlin.de/moritz/}
\newcommand{\sma}{\wedge}
\DeclareMathOperator{\pt}{pt}
\DeclareMathOperator*{\colim}{colim}
\DeclareMathOperator{\id}{id}
\DeclareMathOperator{\Map}{Map}
\DeclareMathOperator{\im}{im}
\DeclareMathOperator{\PT}{PT}
\newtheorem{theorem}{Theorem}[section]
\newtheorem{lemma}[theorem]{Lemma}
\newtheorem{propo}[theorem]{Proposition}
\newtheorem{corollary}[theorem]{Corollary}
\newtheorem*{hauptsatz}{Theorem \ref{coro1}}
\theoremstyle{definition}
\newtheorem{definition}[theorem]{Definition}
\theoremstyle{remark}
\newtheorem{remark}[theorem]{Remark}
\def\co{\colon\thinspace}
\date{\today}
\begin{document}\begin{abstract}
 We give a description of real equivariant bordism for the group $G=\mathbb{Z}/2\times\dots\times\mathbb{Z}/2$, 
which is similar to the description of \emph{complex} equivariant bordism for the group $S^1\times \dots\times S^1$ given by Hanke in \cite[Theorem 1]{H05}.   
\end{abstract}
\maketitle
 
\section{Introduction}
Non-equivariantly, the  Pontryagin--Thom construction (see the classical paper by Thom \cite{T54}) yields an isomorphism between (non-equivariant) real geometric bordism $\mathfrak{N}_*$ and the groups $MO_*$ associated to the (non-equivariant) Thom spectrum.  
For a compact Lie group $G$, Conner and Floyd defined \emph{equivariant} real geometric bordism  $\mathfrak{N}^G_*$ (see \cite[Section 19]{CF79}). 
The study of an equivariant analogue of the Thom spectrum and groups associated to it leads to the definition of \emph{equivariant homotopic} bordism $MO_*^G.$ 
It's possible to define a Pontryagin--Thom map between these groups, but this fails to be an isomorphism if $G$ is a non-trivial group due to a lack of equivariant transversality. 
It it not known, whether the equivariant Pontryagin--Thom map is a monomorphism for all groups; however for $G=\mathbb{Z}/2\times\dots\times\mathbb{Z}/2,$ injectivity was shown by tom Dieck, see \cite[Theorem 1]{tD71}.

Sinha investigates $\mathfrak{N}_*^{\mathbb{Z}/2}$  and describes it as a subring of $MO_*^{\mathbb{Z}/2},$ generated by certain elements
that are images of geometric bordism classes under the equivariant Pontryagin--Thom map, see \cite{S02}. He also describes the quotient $\mathfrak{N}^{\mathbb{Z}/2}_*$--module $MO_*^{\mathbb{Z}/2}/\mathfrak{N}_*^{\mathbb{Z}/2}$ 
which can be interpreted as transversality obstructions.
Instead of the group $\mathbb{Z}/2$, which Sinha considers, we look more generally at real equivariant bordism for groups of the form $\mathbb{Z}/2\times\dots\times\mathbb{Z}/2$. 
Our main result is the following:
\begin{hauptsatz}For $G=\mathbb{Z}/2\times\dots\times \mathbb{Z}/2$, there is a pullback square 
 \[\xymatrix{\mathfrak{N}_*^G\ar[d]\ar[r]&MO_*[e^{-1}_V,Y_{d,V}]\ar[d]\\
                 MO_*^G\ar[r] & MO_*[e_V,e^{-1}_V,Y_{d,V}] } \]
with all maps injective, for certain indeterminates $e_V, e^{-1}_V$ and $Y_{d,V}$, where $V$ runs through a complete set $J$ of representatives of isomorphism classes of non-trivial irreducible representations of $G$ and $d>1.$
\end{hauptsatz}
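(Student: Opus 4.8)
The plan is to realise both right--hand objects as subrings of the ambient ring $R\colonequals MO_*[e_V,e^{-1}_V,Y_{d,V}]$ and then to identify the square with an intersection of subrings. The bottom horizontal map is the localisation homomorphism coming from tom Dieck's localisation theorem: the Euler classes $e_V$, $V\in J$, are non-zero-divisors in $MO^G_*$, inverting all of them yields an isomorphism $MO^G_*[e^{-1}_V\colon V\in J]\cong R$ under which the indeterminates $Y_{d,V}$ become certain universal coefficients of the equivariant Thom classes (so that, geometrically, they record the Stiefel--Whitney data of the $V$--isotypical summands of equivariant normal bundles), and in particular $\ell\colon MO^G_*\hookrightarrow R$ is injective. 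The right vertical map is the inclusion of $S\colonequals MO_*[e^{-1}_V,Y_{d,V}]$ into $R$; it is injective, identifying $S$ with the $MO_*$--subalgebra of $R$ spanned by the monomials $\prod_V e_V^{-n_V}\prod_{d,V}Y_{d,V}^{m_{d,V}}$ with all $n_V\ge 0$ --- that is, with the part of $R$ involving no positive powers of the Euler classes. Since $\ell$ and this inclusion are both injective, the fibre product occurring in the statement is just the intersection $MO^G_*\cap S$ formed inside $R$, and the whole theorem reduces to the identification
\[\operatorname{im}\bigl(\PT\colon\mathfrak{N}^G_*\to MO^G_*\bigr)=MO^G_*\cap S.\]
Granting this, the comparison map from $\mathfrak{N}^G_*$ to the fibre product $MO^G_*\times_R S=MO^G_*\cap S$ is injective because $\PT$ is (tom Dieck \cite[Theorem 1]{tD71}) and surjective by the displayed equality, so the square is a pullback; all four maps are then injective --- $\PT$ and $\ell$ as above, the right vertical inclusion as noted, and the top map because composing it with $S\hookrightarrow R$ recovers the injection $\ell\circ\PT$.

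For the inclusion $\operatorname{im}(\PT)\subseteq MO^G_*\cap S$ I would argue geometrically by way of fixed-point data. Let $M$ be a closed $G$--manifold; its equivariant normal bundle along the fixed manifold $M^G$ splits isotypically, $\nu(M^G\subseteq M)\cong\bigoplus_{V\in J}\nu_V$ with each $\nu_V$ an ordinary real vector bundle over $M^G$, and under the localisation isomorphism $\PT[M]$ is computed from the characteristic numbers of $M^G$ together with those of the $\nu_V$. Carrying this out, a summand $\nu_V$ of rank $r$ contributes precisely the factor $e_V^{-r}$ together with polynomials in the Stiefel--Whitney classes of $\nu_V$, the latter being exactly what the $Y_{d,V}$ were set up to encode; since $r=\operatorname{rk}\nu_V\ge 0$, no positive power of any $e_V$ can occur, whence $\PT[M]\in S$. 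This is the only place the geometric origin of a class is used, and it amounts to a bookkeeping computation once the $Y_{d,V}$ are fixed.

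The reverse inclusion is the substantive part: any $x\in MO^G_*$ with $\ell(x)\in S$ must equal $\PT[M]$ for some closed $G$--manifold $M$. Through the localisation theorem this becomes the concrete assertion that a homotopy-theoretic bordism class whose fixed-point expansion involves no positive Euler powers is geometrically realisable, and I would prove it by exhibiting an explicit geometric generating set. Linear $G$--actions on products of real projective spaces, and more generally projectivisations of equivariant vector bundles over such, supply --- in the spirit of Conner--Floyd \cite{CF79} and of Sinha's treatment of $G=\mathbb{Z}/2$ \cite{S02} --- a family of classes in $\mathfrak{N}^G_*$ whose images in $R$ can be read off from their fixed-point data. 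One then shows, by induction over the lattice of subgroups of $G$ (equivalently, by induction on the number of distinct nontrivial isotypical summands carried by the normal data, a ``depth''), that these images already span $MO^G_*\cap S$ as an $MO_*$--module: at each stage one matches the normal data of lowest remaining depth by an actual $G$--manifold and subtracts it off, the correction term having strictly smaller depth. Because $\PT$ is injective, spanning upgrades to the desired equality, and the pullback square follows.

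The main obstacle is precisely this realisation step. It demands, first, a sufficiently tight hold on the subring $MO^G_*\cap S\subseteq R$ to pin down a workable geometric generating set, and, second --- the genuinely delicate point when $G$ has rank at least two --- the construction of closed $G$--manifolds carrying \emph{prescribed} equivariant normal data along \emph{all} the fixed sets $M^H$, $H\le G$, simultaneously and compatibly, not merely along $M^G$. For rank one this compatibility is vacuous and the argument collapses to Sinha's; in general it calls for a careful bordism-theoretic patching over the subgroup lattice, and that is where I expect the technical core of the proof to lie, mirroring the corresponding difficulty in Hanke's analysis of the torus case \cite{H05}.
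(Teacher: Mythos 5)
Your reduction of the statement to the equality $\PT(\mathfrak{N}^G_*)=\im(\iota\circ\phi_{MO})\cap MO_*[e_V^{-1},Y_{d,V}]$ inside $MO_*[e_V,e_V^{-1},Y_{d,V}]$ is sound (it is exactly the paper's Corollary to the theorem), and the easy inclusion $\im\PT\subseteq MO_*[e_V^{-1},Y_{d,V}]$ via isotypical splitting of the normal bundle of $M^G$ matches the paper's argument, up to the involution $\iota$ you gloss over. But the substantive half — that every homotopical class whose fixed-point expansion has no positive Euler powers is geometrically realizable — is not proved in your proposal; it is only announced as a plan (projectivizations of equivariant bundles plus an induction over a ``depth''), and you yourself flag the key step, constructing closed $G$--manifolds with \emph{prescribed} normal data along the fixed sets of all subgroups simultaneously, as the open obstacle. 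No argument is offered for why the ``matching'' manifold at each stage of the induction exists; that existence is precisely the transversality-type question the theorem is about, and for groups other than $(\mathbb{Z}/2)^k$ the analogous statement is false, so the induction cannot close without invoking some input special to elementary abelian $2$--groups beyond the two tom Dieck injectivity theorems you cite. As it stands the proposal establishes commutativity and injectivity of the square but not the pullback property.

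The paper closes exactly this gap without ever solving the realization problem for closed manifolds. It works with the pair of families $(\mathcal{A},\mathcal{P})$: it identifies $\mathfrak{N}^G_*[\mathcal{A},\mathcal{P}]\cong MO_*[e_V^{-1},Y_{d,V}]$ and $MO^G_*[\mathcal{A},\mathcal{P}]\cong MO_*[e_V,e_V^{-1},Y_{d,V}]$ (here realization is cheap, since the generators $e_V^{-1}$ and $Y_{d,V}$ are hit by disc bundles $D(V)$ and $D(E_{d-|V|}\otimes V)$, which are manifolds \emph{with boundary} free of fixed points on the boundary), invokes Stong's theorem that $\mathfrak{N}^G_*[\mathcal{P}]\to\mathfrak{N}^G_*[\mathcal{A}]$ vanishes precisely for $G=(\mathbb{Z}/2)^k$ — this is the group-specific input your sketch is missing — to make the Conner--Floyd sequence short exact, and uses injectivity of $\PT[\mathcal{P}]$. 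Surjectivity onto the fibre product then follows by a formal diagram chase between the two short exact sequences: given $x\in MO^G_n$ with $\iota\circ\phi_{MO}(x)=i(y)$, one checks $\partial_{\mathfrak{N}}\kappa_{\mathfrak{N}}^{-1}(y)$ dies under $\PT[\mathcal{P}]$, hence vanishes, hence $y=\phi_{\mathfrak{N}}(z)$, and $\PT(z)=x$ by injectivity of $\iota\circ\phi_{MO}$. If you want to salvage your more explicit approach, you would in effect have to reprove this family-theoretic machinery by hand, which is considerably harder than the route the paper takes.
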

This theorem is an analogue of a result in complex equivariant bordism for $G=S^1\times\dots\times S^1$ proved by Hanke, see \cite[Theorem 1]{H05}. In our proof we follows Hanke's argument
closely and use the same techniques, most of which have already been employed in this context in a paper by Sinha, see \cite{S01}. 
The methods of proof include the use of families of subgroups and the analysis of normal data around fixed sets. 
Some considerations become easier in view of the fact that we are considering a finite group, instead of the compact, but infinite, group 
$S^1\times\dots\times S^1.$ 
In order to define homotopic equivariant bordism, we use, but don't discuss in much detail, the notion of complete $G$--universes and equivariant homotopy and homology theory and give detailed references instead. 

In the next section, we review real equivariant bordism and its basic properties. Section \ref{drei} is occupied with the proof of Theorem \ref{thee}, carefully defining all necessary maps first. 
Our article is based on \cite{F11} and we  strive to give elaborate definitions and proofs. However when a result can be found already well-documented in the literature, we refer the reader to it.

\subsection*{Acknowledgments}
I am grateful to Jens Hornbostel for drawing my attention to this problem and answering many questions and to Bernhard Hanke for encouraging me to write this paper. I owe a major debt of gratitude to Peter Landweber for numerous valuable comments and suggestions. 

\section{Real equivariant bordism}
\subsection{Geometric real equivariant bordism}
Let $G$ be a compact Lie group. 
Let's recall the basic notions of geometric real equivariant bordism, which was described in depth first by Conner and Floyd, see  \cite{CF79} and \cite{C79}.

All manifolds we consider in connection with bordism groups of any kind are assumed to be smooth and compact. Group actions on these manifolds are required to be smooth.  
A \emph{singular $G$--manifold} over a pair of $G$--spaces $(X,A)$ is a $G$--manifold $M$ with (possibly empty) boundary together with a $G$--equivariant map \[f\co(M,\partial M)\rightarrow (X,A).\]
Two singular $n$--dimensional $G$--manifolds, $(M_1,f_1)$ and $(M_2,f_2)$, over $(X,A)$ are \emph{bordant} if there is a $(n+1)$--dimensional $G$--manifold $W$ with 
two $G$--submanifolds of codimension $1$,  $\partial_0 W$ and $\partial_1 W$,
 and a $G$--equivariant map \[g\co (W,\partial_1 W)\rightarrow (X,A)\] such that $\partial W$ is $G$--diffeomorphic to 
$\partial_0 W \cup \partial_1 W$, $\partial_0 W$ is $G$--diffeomorphic to $M_1\amalg M_2$ with $g_{|\partial_0 W}=f_1\amalg f_2$ and $\partial\partial_0 W=\partial_0 W \cap\partial_1 W=\partial\partial_1 W$.
This gives an equivalence relation on singular $G$--manifolds over a pair of $G$--spaces $(X,A)$. By 
$\mathfrak{N}_n^G(X,A)$ we denote the set of such bordism classes. If $A$ is empty we shorten $\mathfrak{N}_n^G(X,\varnothing)$ to $\mathfrak{N}_n^G(X)$ and if $X$ is a point, we shorten $\mathfrak{N}^G_n(\pt)$ to $\mathfrak{N}^G_n.$
The direct sum over all dimensions $\mathfrak{N}_*^G(X,A)$ is called the \emph{real equivariant bordism module}. Addition is induced by taking the disjoint union on representative and the $\mathfrak{N}^G_n$--module structure is induced by taking the product of representatives. 
As in the non-equivariant case (compare \cite[Section 5]{CF79}), $\mathfrak{N}_*^G(-,-)$ gives a $\mathbb{Z}$--graded equivariant homology theory, with long exact sequence
\[
\xymatrix{\cdots\ar[r]&\mathfrak{N}_n^G(A)\ar[r]^-{i_*}&\mathfrak{N}_n^G(X)\ar[r]^-{j_*}&\mathfrak{N}_{n}^G(X,A)\ar[r]^-{\partial}&\mathfrak{N}_{n-1}^G(A)\ar[r]&\cdots}.
\]
\subsection{Homotopic real equivariant bordism}\label{esection}
In this section we restrict ourselves to a finite abelian group $G$ for simplicity. Many of the constructions can be carried out for arbitrary compact Lie groups. A \emph{real $G$--representation} is a finite-dimensional real inner product space $V$ together with a smooth action of $G$ on $V$ through linear isometries.
We denote the trivial $n$--dimensional representation, i.e.~$\mathbb{R}^n$ with trivial $G$--action, by $\underline{n}$. Let $|V|$ denote the dimension of a representation $V$ and $S^V$ its one-point compactification. 
Let $J$ be a complete set of representatives of equivalence classes of non-trivial irreducible representations of $G$. 
Notice that $|J|=|G|-1$ and that there is an isomorphism 
\[R\cong \underline{1}\oplus\bigoplus_{V\in J} V\] where $R$ denotes the regular representation. For the basic notions of representations of finite abelian groups see for example \cite[Sections 1.2 and 3.1]{S77}.
A complete $G$--universe in the sense of \cite[Chapter IX, Section 2]{M96} can be given by the countable direct sum
\[\mathcal{U}\colonequals R^{\oplus\infty}.\]
Finite $G$--subspaces of $\mathcal{U}$ will be called indexing spaces. By $BO^G(n)$ we denote the Grassmannian of $n$--dimensional subspaces of $\mathcal{U}$. 
For the trivial group $\{e\}$ a complete $\{e\}$--universe is given by $\mathbb{R}^{\oplus\infty}$ and $BO^{\{e\}}(n)$ is also denoted by $BO(n)$. 
Generally, $BO^G(n)$ is a classifying space for $n$--dimensional real $G$--vector bundles, compare \cite[Chapter I, Section 9]{tD87}. 
Let $\xi_n^G$ be the tautological bundle over $BO^G(n)$ and $T(\xi_n^G)$ be its Thom space. For two indexing spaces $V$ and $W$, such that $V\subset W$, we denote the orthogonal complement of $V$ in $W$ by $W-V$. 
Let $\pi$ be the product $G$--vector bundle with fiber $W-V$. 
The classification of the product bundle $\pi\times\xi_n^G$ yields a $G$--equivariant map 
\[\sigma_{V,W}\co \Sigma^{W-V}T(\xi_{|V|}^G)\to T(\xi_{|W|}^G)\] by passing to Thom spaces. 

For $n\geq 0$, the \emph{homotopic equivariant real bordism groups} are defined in non-negative degrees as
\begin{align*}MO_n^G&\colonequals\colim\limits_{W\in \mathcal{U}}[S^{W\oplus \underline{n}},T(\xi_{|W|}^G)]^G=\colim\limits_{\substack{Z\in \mathcal{U}\\\underline{n}\subset Z}}[S^{Z},T(\xi_{|Z|-n}^G)]^G 
\intertext{and in negative degrees as}
		MO_{-n}^G&\colonequals\colim\limits_{W\in \mathcal{U}}[S^{W},T(\xi_{|W|+n}^G)]^G.
\end{align*}
Here $[-,-]^G$ denotes $G$--homotopy classes of $G$--equivariant maps, 
and the structure of an abelian group can be given to the colimit by using the group structures of homotopy classes of maps from $S^W$ for representations $W$ containing copies of the trivial representation. 
For $V\subset W$, the map in the colimit is obtained by smashing with $S^{W-V}$ and then 
composing with the map $\sigma_{V,W}$ (or with the map $\sigma_{V\oplus\underline{n},W\oplus\underline{n}}$ in case of negative degrees respectively).  

We can view an indexing space $V$ as a $|V|$--dimensional $G$--vector bundle over a point. 
Classifying this bundle and passing to Thom spaces gives a map $S^V\to T(\xi_{|V|})$. If we precompose with the inclusion $S^0\to S^V$, which is induced by the zero map $0\to V$, we obtain an element 
$\varepsilon_V\in MO_{-|V|}^G$, which is called the \emph{Euler class of $V$}.  

\subsection{Equivariant spectra and (co)homology theories}
A more conceptual approach to homotopic real equivariant  bordism is via equivariant spectra. 
We refer the reader to \cite[Chapters XII and XIII]{M96} and \cite[Chapters I, II and X]{LMS86} for the definition of $RO(G)$--graded equivariant (ring) (pre)spectra indexed on a complete $G$--universe and $RO(G)$--graded (co)homology theories  
and sketch the structures relevant to us. 

Thom spaces of appropriate Grassmannians and suspension maps described in the previous section constitute an equivariant prespectrum indexed on the universe $\mathcal{U}.$ 
The spectrification of this prespectrum is called \emph{real equivariant Thom spectrum} $MO^G$ and yields an $RO(G)$--graded homology theory associated to it, which is called \emph{homotopic real equivariant bordism}. 
It turns out that it's sufficient to consider integer gradings instead of $RO(G)$, in view of periodicity isomorphisms (see \cite[Chapter XV, Section 2, p. 157]{M96}).
This leads to the ad hoc definition of homotopic real equivariant bordism we have given above. 
\subsection{The equivariant Pontryagin\texorpdfstring{--}{-}Thom map}\label{injPT}
We give a short description of the Pontryagin--Thom map. 
 For every $k\in\mathbb{N}$ we construct a map $\PT\co \mathfrak{N}^G_k\rightarrow MO^G_k$ as follows. Given an element $[M]$ in $\mathfrak{N}^G_k$ represented by a $k$--dimensional $G$--manifold $M$, choose a $G$--representation $Z$, and an embedding of $M$ in $Z$.
(The fact that this is possible is the 	Mostow-Palais theorem, see \cite{M57} and \cite{P57}. A proof is also given by Wasserman \cite[\S1]{W69}). 
A tubular neighborhood $N$ of the image of $M$ in $Z$ is diffeomorphic to the total space $E(\nu)$ of the normal bundle $\nu$; compare \cite[Chapter 3, Section 22]{CF79}.
We define a map  \[t\co S^Z\rightarrow T\nu\] by sending $N$, viewed as a subset of $S^Z$, to $E(\nu)$ viewed as a subset of $T\nu$ via the diffeomorphism and send everything else, that is $S^Z-N$, 
to the base point of $T\nu.$ 
The classification of the normal bundle gives a map $Tf\co T\nu\rightarrow T(\xi^G_{|Z|-k})$ by passing to Thom spaces and hence we get a homotopy class $[Tf\circ t]\in [S^Z,T(\xi^G_{|Z|-k})]^G$, 
which represents an element in the colimit $MO^G_k$. This element is defined to be $\PT([M]).$                                                                                                                                                                                                                                                                                        
This  generalization of the classical Pontryagin--Thom construction is due to tom Dieck, \cite[\S1]{tD71}. Also compare \cite[\S3]{BH72} and \cite[p. 681]{H05}.
\begin{theorem}\label{PTnt}
 The above construction is well defined and a group homomorphism. It induces a ring homomorphism and a homomorphism of $\mathfrak{N}_*$--modules \[\PT\co \mathfrak{N}_*^G\rightarrow MO^G_*.\]
Furthermore the construction induces a natural transformation of $\mathbb{Z}$--graded equivariant homology theories $\PT\co \mathfrak{N}_*^G(-)\rightarrow MO_*^G(-).$
\end{theorem}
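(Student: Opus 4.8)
The plan is to check the four assertions in turn, in each case reducing to the classical non-equivariant Pontryagin--Thom argument (compare \cite[Section 5]{CF79}), with equivariant transversality replaced by the input we already have at our disposal: equivariant embeddings of $G$--manifolds into representations and equivariant tubular neighborhoods.

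\emph{Well-definedness.} First I would show that the homotopy class $[Tf\circ t]$ depends neither on the chosen tubular neighborhood, nor on the embedding $M\hookrightarrow Z$, nor on $Z$. Two $G$--tubular neighborhoods of $M$ in a fixed $Z$ are ambiently $G$--isotopic by the equivariant tubular neighborhood theorem (see \cite{tD87}), and an ambient $G$--isotopy produces a $G$--homotopy between the two collapse maps. By the equivariant embedding theorem of Mostow and Palais together with the equivariant isotopy extension theorem, any two $G$--embeddings of $M$ into representations become $G$--isotopic after enlarging the ambient representation, so the previous point applies. Enlarging $Z$ to $Z\oplus Z'$ and composing the embedding with $Z\hookrightarrow Z\oplus Z'$ replaces the collapse map by its smash with $S^{Z'}$ followed by the structure map $\sigma$, which is exactly a map in the colimit system defining $MO^G_k$; hence the represented element is unchanged. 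Finally, independence of the representative of a bordism class: a bordism $W$ with $\partial_1W=\varnothing$ and $\partial_0W=M_1\amalg M_2$ can be embedded neatly in $Z\oplus\mathbb{R}$ meeting $Z\times\{0,1\}$ in $M_1$ and $M_2$, and its collapse map restricts on the two ends to those of $M_1$ and $M_2$, hence is a $G$--homotopy $S^Z\sma I_+\to T(\xi^G_{|Z|-k})$ exhibiting $\PT([M_1])=\PT([M_2])$.

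\emph{Additivity, multiplicativity, and the module structure.} For $\PT([M_1\amalg M_2])=\PT([M_1])+\PT([M_2])$ I would choose disjoint $G$--tubular neighborhoods of $M_1$ and $M_2$ inside a common $Z$ (enlarged, if necessary, to contain a trivial summand), so that the collapse map factors through the pinch map $S^Z\to S^Z\vee S^Z$, which is precisely how the group structure on $[S^Z,T(\xi^G_{|Z|-k})]^G$ is defined. For the ring structure: if $M_i\hookrightarrow Z_i$ has normal bundle $\nu_i$, then $M_1\times M_2\hookrightarrow Z_1\oplus Z_2$ has normal bundle $\nu_1\times\nu_2$ and collapse map the smash of the two collapse maps; composing with the maps $T(\xi^G_m)\sma T(\xi^G_n)\to T(\xi^G_{m+n})$ that classify the external product of the tautological bundles -- the maps giving $MO^G$ its ring spectrum structure -- shows $\PT$ is multiplicative, and the class of a point maps to the unit. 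Restricting along the ring maps $\mathfrak{N}_*\to\mathfrak{N}^G_*$ and $MO_*\to MO^G_*$ (a non-equivariant manifold carrying the trivial $G$--action) then gives the statement about $\mathfrak{N}_*$--modules.

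\emph{The relative construction and naturality.} Given a singular $G$--manifold $f\co(M,\partial M)\to(X,A)$ one embeds $M$ neatly in $Z\oplus[0,\infty)$, forms the collapse map onto the Thom space of the normal bundle, classifies the normal bundle while recording the reference map $f$, and thus obtains a map $S^Z\to T(\xi^G_{|Z|-k})\sma X_+$, i.e.\ an element of $MO^G_*(X)=\pi^G_*(MO^G\sma X_+)$; pairs are handled by passing to the cofibre $X_+\to X/A$. Functoriality in $(X,A)$ is immediate since one only post-composes with the induced map of spaces, and the suspension isomorphism corresponds to taking the product of $M$ with an interval. The point that genuinely needs care -- and which I expect to be the main obstacle -- is commutativity with the connecting homomorphism $\partial\co\mathfrak{N}^G_n(X,A)\to\mathfrak{N}^G_{n-1}(A)$, $[M,f]\mapsto[\partial M,f|_{\partial M}]$: one must arrange the neat embedding of $M$ and a collar of $\partial M$ so that, under the cofibre sequence defining $MO^G_*(X,A)$, the collapse map for $M$ restricts to the suspension of the collapse map for $\partial M$. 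This forces one to choose the tubular neighborhoods of $M$ and of $\partial M$ compatibly along the collar, but is otherwise entirely parallel to the non-equivariant case treated in \cite[Section 5]{CF79} and to tom Dieck's discussion in \cite[\S1]{tD71}.
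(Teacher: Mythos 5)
Your proposal is correct and follows the standard equivariant Pontryagin--Thom argument (stable uniqueness of equivariant embeddings and tubular neighborhoods, pinch map for additivity, product embeddings and the ring structure maps of the Thom spectrum for multiplicativity, neat embeddings and collars for the relative and boundary compatibilities). The paper itself offers no proof of this theorem, delegating it to tom Dieck \cite[\S1]{tD71} (compare also \cite[\S3]{BH72} and \cite[p.\ 681]{H05}), and those references argue exactly along the lines you sketch.
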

 One major ingredient for the proof of Theorem \ref{coro1} is the observation that the equivariant Pontryagin--Thom map is injective for certain groups. 
For a pair of $G$--spaces $(X,A)$ with ``good local properties'', i.e.~such that excision can be applied, the following has been shown by tom Dieck.
\begin{theorem}[see {\cite[Theorem 2]{tD71}}]\label{ptinj}
 For $G=\mathbb{Z}/2\times\cdots\times\mathbb{Z}/2$, the Pontryagin--Thom map $\PT\co \mathfrak{N}_*^G(X,A)\to MO_*^G(X,A)$ is a monomorphism. 
\end{theorem}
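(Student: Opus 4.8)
The plan is to reconstruct tom~Dieck's argument from \cite{tD71}. The guiding principle is that, when $G=(\mathbb{Z}/2)^k$, a geometric $G$--bordism class is detected by its \emph{equivariant characteristic numbers}, and that these numbers depend only on the image of the class under $\PT$; injectivity of $\PT$ is then immediate.

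The first task is to make the detecting invariants precise, using fixed--point data. Let $(M,f)$ represent a class in $\mathfrak{N}^G_n(X,A)$ and let $H\leq G$. As $G$ is finite, $M^H$ is a smooth submanifold and a singular $G/H$--manifold over $(X^H,A^H)$, and its normal bundle $\nu_H\colonequals\nu(M^H\subset M)$ is a $G$--vector bundle with no fibrewise $H$--invariants. Because every irreducible real representation of the elementary abelian $2$--group $H$ is one--dimensional, $\nu_H$ splits, as an $H$--bundle, into a Whitney sum of line bundles sorted by the nontrivial characters of $H$; retaining the underlying $G/H$--bundles and classifying them, and then iterating the construction over the subgroup lattice of $G$ (passing at each stage to the residual action on the complement of the next fixed set, and quotienting out a free residual action where one appears), one assembles a natural transformation of $\mathbb{Z}$--graded homology theories
\[
  \rho\co \mathfrak{N}^G_*(X,A)\longrightarrow \prod_{H\leq G}\mathfrak{N}_*\bigl(F_H(X,A)\bigr),
\]
where each $F_H(X,A)$ is a disjoint union of Thom spaces of universal real vector bundles built over $(X^H,A^H)$ (for $X=\pt$ these are products of Grassmannians and classifying spaces $B(G/H)$). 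The crucial input is that $\rho$ is \emph{injective} for $G$ elementary abelian of $2$--power order: for $G=\mathbb{Z}/2$ this is the detection theorem of Conner and Floyd \cite{CF79}, and the general case follows by induction on $k$, splitting off the fixed set of a chosen $\mathbb{Z}/2\subset G$ at each step (compare the techniques of \cite{S01}). Here one must check that the excision and Mayer--Vietoris arguments behind the induction are available, which is precisely where the hypothesis that $(X,A)$ has good local properties is used; moreover each $\mathfrak{N}_*\bigl(F_H(X,A)\bigr)$ is itself detected by ordinary Stiefel--Whitney numbers, so $\rho$ amounts to a complete system of \emph{equivariant} Stiefel--Whitney numbers.

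Next one shows that $\rho$ factors through $\PT$. Inverting the Euler classes $\varepsilon_V$, $V\in J$, in $MO^G_*$ produces a localized theory whose value is assembled from ordinary $MO$--homology of Thom spaces of universal bundles --- the same receptacles that appear in the target of $\rho$ --- and the geometric operation defining $\rho$ (pass to $M^H$, Thom--collapse onto $\nu_H$) is the geometric--fixed--point counterpart, on the homotopical side, of restriction followed by localization. Combining this with the naturality of $\PT$ (Theorem~\ref{PTnt}) and a transversality--free manipulation of collapse maps, one obtains a natural transformation $\widetilde\rho\co MO^G_*(X,A)\to\prod_{H\leq G}\mathfrak{N}_*\bigl(F_H(X,A)\bigr)$ with $\rho=\widetilde\rho\circ\PT$. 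Since $\rho$ is injective, so is $\PT$.

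The main obstacle is the geometric detection statement, i.e.\ the injectivity of $\rho$. This is the only place where the hypothesis $G=(\mathbb{Z}/2)^k$ is genuinely needed: the one--dimensionality of the real irreducibles makes the normal data around each fixed set completely transparent and drives the inductive decomposition, whereas for other groups (already $\mathbb{Z}/4$, or $S^1$) the corresponding statement fails or holds only after completion. The factorization $\rho=\widetilde\rho\circ\PT$ is comparatively formal but still requires care, since $\PT$ is not an isomorphism --- the point being that $\rho$, unlike a hypothetical inverse of $\PT$, never uses equivariant transversality.
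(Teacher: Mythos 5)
The paper itself does not prove this statement: it is quoted from tom Dieck \cite[Theorem 2]{tD71}, so there is no internal proof to compare with, and your sketch has to stand on its own. It does not, because the one step that carries the entire content of the theorem --- the injectivity of your detecting map $\rho$ --- is asserted rather than proved. First, as described, $\rho$ is not even well defined: the components obtained by ``quotienting out a free residual action'' are extracted from the complement of a tubular neighborhood of a fixed set, which is a manifold with boundary, and turning that into a class in $\mathfrak{N}_*\bigl(B(G/H)\times\cdots\bigr)$ requires the relative constructions of the Conner--Floyd exact sequences for families, not just a classifying map. If instead you let $\rho$ record only the fixed sets $M^H$ with their normal-bundle decompositions as singular data over $(X^H,A^H)$, then it is well defined, but its injectivity is exactly the hard geometric detection theorem: for $X=\pt$ this is Stong's result \cite{S70} (equivalent, via Proposition \ref{kappa1}, to Proposition \ref{toralmono}), and your justification --- Conner--Floyd for $\mathbb{Z}/2$ plus ``induction on $k$, splitting off the fixed set of a chosen $\mathbb{Z}/2$'' --- does not deliver it. The Conner--Floyd detection theorem in \cite{CF79} concerns closed manifolds, i.e. $X=\pt$, not arbitrary pairs with good local properties, and the inductive step is precisely where the difficulty sits: away from the chosen fixed set the action has proper isotropy, and handling that part needs the families machinery together with the vanishing of $\mathfrak{N}^G_*[\mathcal{P}]\to\mathfrak{N}^G_*[\mathcal{A}]$, which is the special (and nontrivial) feature of elementary abelian $2$--groups; citing \cite{S01}, a homotopy-theoretic computation, does not supply this geometric input.

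The factorization $\rho=\widetilde\rho\circ\PT$ is plausible for the $G$--fixed component (it is the real analogue of Lemma \ref{phicommu}, i.e. tom Dieck's argument in \cite{tD70}), but for the components of your $\rho$ involving quotients and $B(G/H)$ you would actually have to construct $\widetilde\rho$ (restriction to $H$, geometric $H$--fixed points, then a Borel-type transformation); ``a transversality-free manipulation of collapse maps'' is not a construction. Finally, watch the circularity: in this paper the injectivity of the geometric fixed-point map $\phi_{\mathfrak N}$ is deduced either from Stong's theorem or from the injectivity of $\PT$ itself, so your plan is non-circular only if the detection statement is established purely geometrically, in Stong's manner; tom Dieck's cited proof instead runs through localized homotopical bordism and equivariant characteristic numbers for pairs, which is the part your sketch leaves out.
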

For non-trivial $G$ it can be seen that the Euler classes $\varepsilon_V$ for $G$--representations $V$ without trivial summand are non-trivial elements in $MO_{-|V|}^G$, see \cite[Chapter XV, Lemma 3.1]{M96}. \
Hence the Pontryagin--Thom map is not surjective, since it's a graded map and $\mathfrak{N}^G_*$ has no elements of negative degree by definition. 
\subsection{Families of subgroups}\label{CFtDes}
It's useful to consider geometric bordism groups with restricted iso\-tropy. We review concepts of \cite[Sections 5 and 6]{CF66} and \cite[Chapter XV, Section 3]{M96}.
All subgroups considered in this section are required to be closed.
A \emph{family of subgroups} $\mathcal{F}$ of $G$ is a set of subgroups of $G$ that is closed under conjugation and closed under taking subgroups.
 The family of all subgroups in $G$ is 
\begin{align*}
\mathcal{A}&\colonequals\{H\subset G\,|\,H \text{ closed subgroup in } G\} \\\intertext{and the family of all proper subgroups in $G$ is} 
\mathcal{P}&\colonequals\{H\subset G\,|\, H\neq G \text{ closed subgroup in } G\}.
\end{align*}
Let $\mathcal{F}'\subset\mathcal{F}$ be a pair of families of subgroups of $G$.  An \emph{$\mathcal{F}$--manifold} is defined to be a $G$--manifold $M$ such that all isotropy groups of $M$ are in $\mathcal{F}$ 
and an \emph{$(\mathcal{F},\mathcal{F}')$--manifold} is an $\mathcal{F}$--manifold such that $\partial M$ is an $\mathcal{F}'$--manifold.
With suitable bordisms we then define groups $\mathfrak{N}^G_*[\mathcal{F},\mathcal{F}']$ of bordism classes of $(\mathcal{F},\mathcal{F}')$--manifolds.
To define a similar concept in homotopic bordism, we consider certain types of universal spaces. 
Let $\mathcal{F}$ be a family of subgroups. There is a space $E\mathcal{F}$ called a \emph{universal $\mathcal{F}$--space of $G$}, which is unique up to $G$--homotopy, with the following properties: 
$(E\mathcal{F})^H$ is (non-equivariantly) contractible for $H\in\mathcal{F}$ and it's empty for $H\notin\mathcal{F}.$ Given an $\mathcal{F}$--manifold $M$, there is one and, up to homotopy, only one $G$--equivariant map $M\to E\mathcal{F}$. For more on this space see \cite[Satz 1]{tD72}, \cite[Chapter I, Theorem (6.6)]{tD87} and \cite[p.~45]{M96}.
With these universal $\mathcal{F}$--spaces we make the following identification:
\[\mathfrak{N}^G_*[\mathcal{F},\mathcal{F}']\cong \mathfrak{N}^G_*(E\mathcal{F},E\mathcal{F}').\]
This leads to the the definition
\[MO_*^G[\mathcal{F},\mathcal{F}']\colonequals MO_*^G(E\mathcal{F},E\mathcal{F}').\]
The long exact sequence of the pair $(E\mathcal{F},E\mathcal {F'})$ for $\mathfrak{N}_*^G(-,-)$ gives 
\[
\xymatrix{\cdots\ar[r] &\mathfrak{N}^G_*[\mathcal{F}']\ar[r]^-{i_{\mathfrak N}} &\mathfrak{N}^G_*[\mathcal{F}]\ar[r]^-{j_\mathfrak N} &\mathfrak{N}^G_*[\mathcal{F},\mathcal{F}']\ar[r]^-{\partial_\mathfrak N} &\mathfrak{N}^G_{*-1}[\mathcal{F}']\ar[r]&\cdots.}
\]
We call this the \emph{Conner--Floyd exact sequence} and it has a geometric interpretation: The map $\partial$ is actually induced by taking boundaries of singular $(\mathcal{F},\mathcal{F}')$--manifolds.
For real homotopical equivariant bordism $MO^G_*(-)$ we call the corresponding long exact sequence the \emph{tom Dieck exact sequence:}\[
\xymatrix{\cdots\ar[r] &MO^G_*[\mathcal{F}']\ar[r]^-{i_{MO}} &MO^G_*[\mathcal{F}]\ar[r]^-{j_{MO}} &MO^G_*[\mathcal{F},\mathcal{F}']\ar[r]^-{\partial_{MO}} &MO^G_{*-1}[\mathcal{F}']\ar[r]&\cdots}.
\]

\section{The case \texorpdfstring{$G=\mathbb{Z}/2\times\cdots\times\mathbb{Z}/2$}{G=Z/2...Z/2}}\label{drei}
From now on we set $G=(\mathbb{Z}/2)^{l}$ for some $l>0$ and fix a complete set $J$ of representatives of isomorphism classes of non-trivial irreducible $G$--representations.  
\subsection{Map from homotopic bordism} 
First we define certain indexing tools. 
The free abelian group $\mathbb{Z}J$ can be considered to be an additive subgroup of the real representation ring $RO(G)$.
We set  \[AO_*(G)\colonequals\mathbb{Z}[\mathbb{Z}J].\]
This is a graded ring; the grading is induced by the virtual dimension of elements in $\mathbb{Z}J\subset RO(G)$. 
  We have an isomorphism \[AO_*(G)\cong\mathbb{Z}{[e_V,e^{-1}_V]}_{V\in J},\] for indeterminates $e_V$ of degree $-|V|$ and $e_V^{-1}$ of degree $|V|$ with the obvious relations. It is induced by 
 \[\mathbb{Z}J\ni\sum_{V\in J}\alpha_VV\mapsto\prod_{V\in J}e_V^{-\alpha_V}.\]
 Compare the analogous complex definitions in \cite[p.683]{H05}.

Let's consider the fixed set of the equivariant Thom space.
\begin{lemma}[compare {\cite[Proposition 4.7]{S01}} and {\cite[Lemma 2.1]{tD70}}]\label{thomfix} We have the following homotopy equivalence:
\[(T(\xi^G_n))^G\simeq \bigvee_{\substack{W\in RO^+(G)\\ |W|=n}} T(\xi_{|W^G|})\sma \left(\prod_{V\in J}BO(\nu_V(W))\right)_+,\] 
where $RO^+(G)$ is a set of $G$--representations, containing one from every isomorphism class. It can be viewed as a subset of $RO(G)$. The number of times $V$ appears as a direct summand of $W$ is denoted by $\nu_V(W).$
\end{lemma}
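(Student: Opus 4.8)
The plan is to compute the fixed points of the equivariant Thom space $T(\xi^G_n)$ by first understanding the fixed points of the classifying space $BO^G(n)$, and then identifying the restriction of the tautological bundle. Recall that $BO^G(n)$ is the Grassmannian of $n$--dimensional subspaces of the complete universe $\mathcal{U}$. A subspace $P\subset\mathcal{U}$ is fixed by $G$ precisely when $G$ maps $P$ to itself, i.e.\ when $P$ is a $G$--subrepresentation of $\mathcal{U}$. Since $\mathcal{U}$ contains each irreducible representation with infinite multiplicity, the space of such $G$--invariant $n$--dimensional subspaces decomposes, according to the isomorphism type $W$ of the subrepresentation, as a disjoint union over all $W\in RO^+(G)$ with $|W|=n$. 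For each fixed $W$, writing $W\cong W^G\oplus\bigoplus_{V\in J}V^{\oplus\nu_V(W)}$, choosing an invariant subspace isomorphic to $W$ amounts to choosing a $|W^G|$--dimensional subspace of the (infinite-dimensional) trivial summand of $\mathcal{U}$, together with, for each $V\in J$, a $\nu_V(W)$--dimensional subspace of the $V$--isotypic summand of $\mathcal{U}$ (an isometry between that chosen subspace and $V^{\oplus\nu_V(W)}$ is then unique up to the relevant unitary/orthogonal group, but by Schur's lemma over $\mathbb{R}$ for these representations of $(\mathbb{Z}/2)^l$ — all of which have endomorphism field $\mathbb{R}$ — this choice of subspace is exactly what is parametrized). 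This yields
\[
(BO^G(n))^G\ \simeq\ \coprod_{\substack{W\in RO^+(G)\\ |W|=n}}\ BO(|W^G|)\times\prod_{V\in J}BO(\nu_V(W)).
\]

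Next I would identify the restriction of the tautological bundle $\xi^G_n$ to each component. Over the point of $(BO^G(n))^G$ corresponding to a subspace $P\cong W$, the fiber of $\xi^G_n$ is $P$ itself, with its $G$--action. Taking $G$--fixed points of the total space of the bundle (which is what one must do to compute the fixed points of the Thom space, since $T(\xi)^G = T(\xi|_{(\text{base})^G}^G)$ where $\xi^G$ denotes the fixedpoint subbundle) kills the non-trivial isotypic parts of each fiber and leaves only the $W^G$--part. Hence the fixed subbundle over the component indexed by $W$ is exactly the tautological bundle $\xi_{|W^G|}$ over the factor $BO(|W^G|)$, pulled back to the product, while the factors $\prod_{V\in J}BO(\nu_V(W))$ contribute trivially (they are "along for the ride"). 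The key general fact here is that for a $G$--vector bundle $\xi$ over a $G$--space $X$, one has a homeomorphism $T(\xi)^G\cong T\bigl(\xi^G|_{X^G}\bigr)$, where $\xi^G$ is the subbundle of $G$--fixed vectors; this is standard (see e.g.\ \cite[Chapter XV]{M96} or \cite{tD70}).

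Combining these two steps: the fixed points of the Thom space are the Thom space of the fixed subbundle over $\coprod_W BO(|W^G|)\times\prod_V BO(\nu_V(W))$. Since Thom spaces turn disjoint unions of base spaces into wedges, and since the Thom space of a bundle pulled back along a projection $X\times Y\to X$ is $T(\xi)\wedge Y_+$, we obtain
\[
(T(\xi^G_n))^G\ \simeq\ \bigvee_{\substack{W\in RO^+(G)\\ |W|=n}} T(\xi_{|W^G|})\wedge\left(\prod_{V\in J}BO(\nu_V(W))\right)_{\!+},
\]
which is the claimed equivalence. A small point to be careful about is the passage between the Grassmannian model $BO^G(n)$ built from a specific universe $\mathcal{U}$ and the homotopy-theoretic classifying space: one should check that $\mathcal{U}=R^{\oplus\infty}$ is large enough that every isotypic component appears with infinite multiplicity (true, since $R$ contains every irreducible), so that each $BO(\nu_V(W))$ and $BO(|W^G|)$ really is the full infinite Grassmannian and not a truncation.

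The main obstacle I expect is bookkeeping rather than conceptual: carefully setting up the homeomorphism $T(\xi)^G\cong T(\xi^G|_{X^G})$ in enough generality (allowing $X$ infinite-dimensional and the universe to be a colimit of indexing spaces), and verifying that the decomposition of the fixedpoint Grassmannian into the stated product is compatible with the bundle structure — i.e.\ that the fixed subbundle genuinely splits off as $\xi_{|W^G|}$ on the nose. The real-representation subtlety (as opposed to Hanke's complex case) is that one must confirm all non-trivial irreducibles of $(\mathbb{Z}/2)^l$ are of real type with $\mathrm{End}_G(V)=\mathbb{R}$, so that the "internal" unitary groups that would otherwise appear are trivial and the parametrizing spaces are honestly the real Grassmannians $BO(\nu_V(W))$; this is where I would lean on the basic representation theory referenced via \cite{S77}. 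Everything else follows Sinha's argument in \cite[Proposition 4.7]{S01} and tom Dieck's in \cite[Lemma 2.1]{tD70} essentially verbatim, with $\mathbb{C}$ replaced by $\mathbb{R}$ throughout.
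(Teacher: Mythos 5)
Your proof is correct, and it reaches the paper's intermediate decomposition $(BO^G(n))^G\simeq\coprod_{W}BO(|W^G|)\times\prod_{V\in J}BO(\nu_V(W))$ by a somewhat different route. The paper does not touch the Grassmannian model directly: it argues that $(BO^G(n))^G$ classifies $n$--dimensional $G$--vector bundles over bases with trivial $G$--action, invokes the isotypical decomposition $E\cong\bigoplus_{V\in J\cup\{\underline 1\}}E_V\otimes_{\mathbb R}V$ of such bundles (Segal, and Oliver for finite groups), decomposes the base by fiber type $W$, and reads off the product of classifying spaces from the classifying maps of the $E_V$; you instead identify $G$--fixed points of the Grassmannian of $\mathcal U=R^{\oplus\infty}$ concretely as invariant subspaces, split them into isotypic pieces, and use that $\mathrm{End}_G(V)=\mathbb R$ (all irreducibles of $(\mathbb Z/2)^l$ being one--dimensional of real type) so each isotypic piece is parametrized by an honest real Grassmannian. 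Your argument is more hands-on and closer to tom Dieck's original computation, and it makes explicit the model-dependent points the paper leaves implicit (that $\mathcal U$ contains every isotype with infinite multiplicity, and that $T(\xi)^G\cong T(\xi^G|_{X^G})$); the paper's argument is model-independent and delegates the linear algebra to the cited decomposition theorem. From the decomposition of the fixed-point space onward — identifying the fixed subbundle as $\xi_{|W^G|}$ pulled back along the projection, with the factors $\prod_V BO(\nu_V(W))$ contributing only as the zero section, and passing to Thom spaces so that disjoint unions become wedges and the projection pullback becomes a smash with $\bigl(\prod_V BO(\nu_V(W))\bigr)_+$ — the two proofs coincide.
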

\begin{proof}The space $(BO^G(n))^G$ classifies $n$--dimensional $G$--vector bundles $E$ over a base space $X$ with trivial $G$--action. Such an $E$ decomposes into $V$--isotypical subbundles as follows:
\[E\cong \bigoplus_{V\in J\cup\{\underline{1}\}}E_V\otimes_\mathbb{R}V.\]
This is a well-known result; Segal gives a proof in \cite[Proposition 2]{Se68}, and for $G$ a finite group Oliver \cite[Appendix]{O96} also gives a proof.
The base $X$ decomposes into a disjoint union of subspaces $X_W$ over which $E$ has constant fiber $W$. The restriction of $E$ to $X_W$ is classified by a map to \[\prod_{V\in J\cup\{1\}}BO(\nu_V(W))=BO(|W^G|)\times\prod_{V\in J}BO(\nu_V(W)),\] 
where the map to the factor $BO(\nu_V(W))$ is a classifying map of $E_V$. The universal bundle over this space is the product $\xi_{|W^G|}\times \xi$, 
where $\xi_{|W^G|}$ is the $|W^G|$--dimensional universal bundle and $\xi$ is a $G$--vector bundle without trivial direct summand in the fiber, so $E(\xi)^G$ is the zero section $\prod_{V\in J}BO(\nu_V(W))$. 
Taking all these classifying spaces together we get 
\begin{align*}(BO^G(n))^G&\simeq \coprod_{W\in RO^+(G)}\left(BO(|W^G|)\times\prod_{V\in J}BO(\nu_V(W)) \right)\\
\intertext{and 
passing to Thom spaces gives} 
(T(\xi^G_n))^G&
\simeq \bigvee_{\substack{W\in RO^+(G)\\ |W|=n}} T(\xi_{|W^G|})\sma \left(\prod_{V\in J}BO(\nu_V(W))\right)_+.
\end{align*}
\end{proof}
We want to define a ring homomorphism $\phi_{MO}\co MO^G_*\to MO_*[e_V, e_V^{-1},Y_{V,d}]$ and start with a map which is essentially restriction to fixed sets. 
To be more precise, we pass from the Thom spectrum $MO^G$ to its geometric fixed sets spectrum $\Phi^GMO^G$.
For detailed definitions see \cite[Chapter XVI, Section 3]{M96} (also compare \cite[Chapter II \S 9 and Chapter I \S 3]{LMS86} and \cite[Definition 4.3]{S01}).

We consider the classifying space $BO(n)$ and define
\[BO\colonequals \colim_nBO(n).\]
The space $BO$ carries an $H$--space structure that arises from classifying the product of bundles. 
We set \[B\colonequals BO^{\times |J|}.\] Compare \cite[\S2]{tD70} for the complex analogue $BU$; 
notice that since $G$ is finite, we don't need to consider a proper subset of $BO^{\times |J|}.$
\begin{propo}[compare {\cite[Theorem 4.9]{S01}}]\label{fixequi}
 There is an equivalence of ring spectra \[\Phi^GMO^G\simeq I_{RO(G)}\sma MO\sma B_+\] with \[I_{RO(G)}\colonequals\bigvee_{\substack{W\in RO(G)\\|W|=0}} S^{|W^G|}.\]
Here $S^{|W^G|}$ denotes a suspended sphere spectrum and $|W^G|$ denotes the (possibly negative) virtual dimension of $W'^G-W''^G$ if $W$ is represented by $W'-W''$.
Note that $I_{RO(G)}$ carries the structure of a ring spectrum induced by the isomorphism 
\[S^{|W^G|}\sma S^{|V^G|}\to S^{|(W\oplus V)^G|}\]
for elements $W,V\in RO(G)$, $|W|=|V|=0.$
\end{propo}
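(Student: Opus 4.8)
The plan is to read $\Phi^GMO^G$ off Lemma~\ref{thomfix}; the argument is the real analogue of \cite[Theorem~4.9]{S01} and of the complex computation in \cite{H05}. Recall that $\Phi^GMO^G$ is obtained by taking $G$--fixed points of the spaces $T(\xi^G_{|W|})$ of the prespectrum defining $MO^G$, with bonding maps the $G$--fixed maps of the $\sigma_{V,W}$, and then spectrifying over $\mathcal U^G$, the colimit still running over all indexing spaces $W\subset\mathcal U$ (see \cite[Chapter XVI, Section 3]{M96}, \cite[Chapter II, \S9]{LMS86}, \cite[Definition 4.3]{S01}). Since $\mathcal U=R^{\oplus\infty}\cong\underline 1^{\oplus\infty}\oplus\bigoplus_{V\in J}V^{\oplus\infty}$, this colimit may be computed along the cofinal family of indexing spaces $W\cong\underline p\oplus\bigoplus_{V\in J}V^{\oplus q_V}$, $(p;(q_V))\in\mathbb{Z}_{\geq 0}^{1+|J|}$, for which $W^G=\underline p$. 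So the whole task is to describe the fixed bonding maps under the identification of Lemma~\ref{thomfix}.

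First I would record how the two elementary bonding maps act on the wedge summands of Lemma~\ref{thomfix}; these summands are indexed by representations $U\in RO^+(G)$ with $|U|=|W|$ and carry the factors $T(\xi_{|U^G|})$ and $\bigl(\prod_{V\in J}BO(\nu_V(U))\bigr)_+$. Enlarging $W$ by a trivial summand $\underline 1$ leaves every $\nu_V$ ($V\in J$) unchanged and raises $|W^G|$ by one; since $(\Sigma^{\underline 1}X)^G=\Sigma^{\underline 1}(X^G)$, the map $(\sigma_{W,W\oplus\underline 1})^G$ restricts on the summand of $U$ to the suspension bonding map $\Sigma T(\xi_{|U^G|})\to T(\xi_{|U^G|+1})$ of $MO$, smashed with the identity of $\bigl(\prod_{V\in J}BO(\nu_V(U))\bigr)_+$. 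Enlarging $W$ by a non--trivial irreducible $V_0\in J$ leaves $|W^G|$ unchanged (because $V_0^G=0$, so $\Sigma^{V_0}$ becomes $\Sigma^0$ on $G$--fixed points) and raises $\nu_{V_0}$ by one; classifying the relevant product bundle, the map $(\sigma_{W,W\oplus V_0})^G$ then sends the summand of $U$ to the summand of $U\oplus V_0$ via the identity on $T(\xi_{|U^G|})$ smashed with the standard inclusion $BO(\nu_{V_0}(U))\hookrightarrow BO(\nu_{V_0}(U)+1)$ on the $V_0$--factor. Making these two identifications precise — in particular, that the classification of the product bundles in the proof of Lemma~\ref{thomfix} decomposes exactly in this way — is what I expect to be the technical core of the argument, and I would carry out the bundle bookkeeping following Sinha.

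Now take the colimit in two stages. A summand over $W$ of shape $(p;(q_V))$ that is indexed by $U$ of shape $(m;(a_V))$ (so $m=|U^G|$ and $a_V=\nu_V(U)$) satisfies $m+\sum_Va_V|V|=p+\sum_Vq_V|V|$, i.e.\ $m=p+\sum_V(q_V-a_V)|V|$, so the shift $m-p$ does not depend on $p$. Hence its contribution $\Sigma^{-p}\Sigma^\infty T(\xi_m)$ equals $\Sigma^{\sum_V(q_V-a_V)|V|}\bigl(\Sigma^{-m}\Sigma^\infty T(\xi_m)\bigr)$, and the colimit over $p$ (using the first type of bonding map) assembles the $\{T(\xi_m)\}_m$ into $MO$, giving $\bigvee_{(a_V)\in\mathbb{Z}_{\geq 0}^J}\Sigma^{\sum_V(q_V-a_V)|V|}MO\sma\bigl(\prod_{V\in J}BO(a_V)\bigr)_+$. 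The subsequent colimit over the $q_V$ uses the second type of bonding map, which increments each $a_V$ while leaving $q_V-a_V$ fixed; writing $k_V\colonequals q_V-a_V\in\mathbb{Z}$, the telescope in the $a_V$--direction yields $\colim_a BO(a)=BO$ in each of the $|J|$ slots, so
\[\Phi^GMO^G\ \simeq\ \bigvee_{(k_V)\in\mathbb{Z}J}\Sigma^{\sum_{V\in J}k_V|V|}\bigl(MO\sma B_+\bigr).\]
Finally, a virtual representation $W\in RO(G)$ with $|W|=0$ is determined by $(\nu_V(W))_{V\in J}\in\mathbb{Z}J$ together with $\nu_{\underline 1}(W)=-\sum_{V\in J}\nu_V(W)|V|=|W^G|$, so $I_{RO(G)}=\bigvee_{(\nu_V)\in\mathbb{Z}J}S^{-\sum_{V\in J}\nu_V|V|}$ and the wedge above is exactly $I_{RO(G)}\sma MO\sma B_+$, the two index sets matching via $k_V\leftrightarrow-\nu_V$.

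It remains to promote this to an equivalence of ring spectra. The product on $\Phi^GMO^G$ is induced by that of $MO^G$, hence by Whitney sum of classified bundles; under the decomposition above this is the usual product of $MO$ on the $MO$--factor, the Whitney--sum $H$--space structure of $BO$ in each of the $|J|$ slots on the $B$--factor, and addition of virtual representations on the index, which is precisely the ring structure on $I_{RO(G)}$ induced by $S^{|W^G|}\sma S^{|V^G|}\cong S^{|(W\oplus V)^G|}$. Checking that all of this is compatible with the equivalence above, together with spelling out the geometric--fixed--point formalism (the change of universe and the spectrification of the fixed prespectrum), is the bookkeeping still to be done; it runs parallel to \cite[Theorem~4.9]{S01} and to the complex case of \cite{H05}.
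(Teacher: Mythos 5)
Your proposal is correct and follows essentially the same route as the paper: the paper gives no independent argument but states that Sinha's proof of \cite[Theorem 4.9]{S01} translates word-by-word, with Lemma \ref{thomfix} as the essential ingredient, and your outline (fixed-point prespectrum, identification of the two kinds of fixed bonding maps, colimit over trivial and non-trivial suspensions, reindexing by virtual representations of dimension zero, compatibility of the Whitney-sum products) is exactly that translated argument spelled out. The points you flag as remaining bookkeeping are precisely the parts the paper also leaves to \cite{S01} and the cited references on geometric fixed points.
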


The proof of the analogous complex case \cite[Theorem 4.9]{S01} can be translated word-by-word; the essential ingredient is the description of the equivariant Thom space of Lemma \ref{thomfix}.  

\begin{propo}[compare {\cite[p.\ 684]{H05}}]\label{MOBiso}
 There is an isomorphism of graded rings 
\[\xymatrix{(I_{RO(G)}\sma MO\sma B_+)_*\ar[r]^\cong&MO_*(B)\otimes AO_*(G)}.\]
\end{propo}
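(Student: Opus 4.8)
The plan is to read the homotopy groups of $I_{RO(G)}\sma MO\sma B_+$ straight off the wedge decomposition defining $I_{RO(G)}$, reindex the resulting direct sum so that it matches the graded abelian group underlying $MO_*(B)\otimes AO_*(G)$, and then check that the two ring structures correspond. Using that homotopy groups of spectra commute with wedge sums and that smashing with the (possibly desuspended) sphere spectrum $S^{|W^G|}$ shifts degrees by $|W^G|$, one gets
\[
(I_{RO(G)}\sma MO\sma B_+)_*\;\cong\;\bigoplus_{\substack{W\in RO(G)\\|W|=0}}(MO\sma B_+)_{*-|W^G|}\;=\;\bigoplus_{\substack{W\in RO(G)\\|W|=0}}MO_{*-|W^G|}(B),
\]
where $MO_*(B)\colonequals\pi_*(MO\sma B_+)$ denotes unreduced $MO$--homology. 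Since every irreducible $G$--representation is one--dimensional, $RO(G)=\mathbb{Z}\underline{1}\oplus\mathbb{Z}J$, so a virtual representation $W$ with $|W|=0$ is uniquely of the form $W=-|\beta|\,\underline{1}+\beta$ with $\beta\in\mathbb{Z}J$; because $\beta$ has no trivial summand, $W^G=-|\beta|\,\underline{1}$ and hence $|W^G|=-|\beta|$.

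Next I would reindex. The assignment $W\mapsto\gamma\colonequals-\beta$ is a group isomorphism $\{W\in RO(G):|W|=0\}\xrightarrow{\ \cong\ }\mathbb{Z}J$ under which the summand $MO_{*-|W^G|}(B)=MO_{*+|\beta|}(B)$ becomes $MO_{*-|\gamma|}(B)$, so that
\[
(I_{RO(G)}\sma MO\sma B_+)_*\;\cong\;\bigoplus_{\gamma\in\mathbb{Z}J}MO_{*-|\gamma|}(B).
\]
This is exactly the graded group underlying $MO_*(B)\otimes AO_*(G)$: the ring $AO_*(G)=\mathbb{Z}[\mathbb{Z}J]$ is free abelian on $\mathbb{Z}J$ with the basis element attached to $\gamma$ sitting in degree $|\gamma|$ (equivalently, $\gamma=\sum_V\alpha_V V$ corresponds under the stated isomorphism $AO_*(G)\cong\mathbb{Z}[e_V,e_V^{-1}]_{V\in J}$ to $\prod_V e_V^{-\alpha_V}$, of degree $\sum_V\alpha_V|V|=|\gamma|$), so the degree--$n$ part of $MO_*(B)\otimes AO_*(G)$ is $\bigoplus_{\gamma\in\mathbb{Z}J}MO_{n-|\gamma|}(B)$.

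It remains to make this identification multiplicative. Here I would unwind the ring spectrum structure furnished by Proposition~\ref{fixequi}: $MO$ is a commutative ring spectrum, $B_+$ is a ring spectrum via the Whitney--sum $H$--space structure on $BO$ (hence on $B=BO^{\times|J|}$), and the product on $I_{RO(G)}$ is induced by the canonical isomorphisms $S^{|W^G|}\sma S^{|V^G|}\to S^{|(W\oplus V)^G|}$. Consequently the induced product on $\pi_*$ carries the summand indexed by $W$ times the one indexed by $V$ into the summand indexed by $W\oplus V$, and on underlying groups it is the Pontryagin product of $MO_*(B)$; in other words $(I_{RO(G)}\sma MO\sma B_+)_*$ is the group ring $MO_*(B)[\mathbb{Z}J]$. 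Since $MO_*(B)\otimes AO_*(G)=MO_*(B)\otimes_{\mathbb{Z}}\mathbb{Z}[\mathbb{Z}J]=MO_*(B)[\mathbb{Z}J]$ as graded rings, and $\beta\mapsto-\beta$ is a group automorphism of $\mathbb{Z}J$, this gives the asserted isomorphism of graded rings.

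The computation itself is bookkeeping; the points that genuinely need care are confirming that the equivalence of Proposition~\ref{fixequi} is one of \emph{ring} spectra (so that no twists or Koszul signs intervene and the wedge summands really do multiply according to the group law of $\{W\in RO(G):|W|=0\}\cong\mathbb{Z}J$, with fibrewise product the naive Pontryagin product on $MO_*(B)$), and keeping the grading conventions — in particular the sign in the identification of virtual representations with monomials in the $e_V$ — consistent throughout.
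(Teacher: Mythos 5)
Your proposal is correct and follows essentially the same route as the paper: read off the homotopy of the wedge of (de)suspended copies of $MO\sma B_+$ and identify the summand indexed by a dimension-zero virtual representation with $MO_*(B)$ tensored with the corresponding monomial in the $e_V$'s (your reindexing $W\mapsto-\beta$ matches the paper's assignment of $e_{(W^G)^\perp}\cdot e^{-1}_{(U^G)^\perp}$ to the summand indexed by $W-U$). You are in fact more explicit than the paper about the multiplicative structure, checking that the product sends the $W$- and $V$-summands into the $W\oplus V$-summand so that one gets the group ring $MO_*(B)[\mathbb{Z}J]$, a point the paper leaves implicit.
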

\begin{proof}
The  spectrum $I_{RO(G)}\sma MO\sma B_+$ can be viewed as a wedge of suspended copies of $MO\sma B_+$. For such a copy indexed by an element $W-U\in RO(G)$ of virtual dimension zero with $W=W^G\oplus (W^G)^{\perp}$ and $U=U^G\oplus (U^G)^\perp$ we identify $(S^{(W-U)^G}\sma MO\sma B_+)_*$ with \[MO_*(B)\otimes(e_{(W^G)^\perp}\cdot e^{-1}_{(U^G)^\perp})\subset MO_*(B)\otimes AO_*(G).\]
This induces the desired isomorphism. \end{proof}
\begin{propo}[compare {\cite[Theorem 4.10]{S01}}]\label{kochman}
 There is an isomorphism of graded rings
\[MO_*(B)\otimes AO_*(G)\cong MO_*[e_V,e_V^{-1},Y_{d,V}],\] 
where $V$ runs through $J$ and $d>1$. Here the degree of the $e_V$'s is $1$, the degree of the $e_V^{-1}$'s is $-1$ and the degree of the $Y_{d,V}$'s is $d$.  
\end{propo}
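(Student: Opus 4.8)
The plan is to reduce the statement, following Sinha's treatment of the complex case in \cite[Theorem 4.10]{S01}, to the classical computation of $MO_*(BO)$ together with a purely formal change of variables.

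First I would recall that, after Thom, $MO$ is a wedge of suspensions of $H\mathbb{F}_2$, so that $MO_*(X)\cong MO_*\otimes_{\mathbb{F}_2}H_*(X;\mathbb{F}_2)$ for every space $X$ and $MO_*$-homology satisfies the Künneth isomorphism (the coefficient spectrum $H\mathbb{F}_2$ being a field spectrum). Applied to $BO$ with its Whitney-sum $H$-space structure, and using the classical fact that $H_*(BO;\mathbb{F}_2)=\mathbb{F}_2[a_1,a_2,\dots]$ with $|a_i|=i$ — the $a_i$ being the images under $BO(1)=\mathbb{RP}^\infty\hookrightarrow BO$ of the generators of $H_*(\mathbb{RP}^\infty;\mathbb{F}_2)$ — this yields an isomorphism of graded $MO_*$-algebras $MO_*(BO)\cong MO_*[a_1,a_2,a_3,\dots]$, and then, by Künneth again,
\[
MO_*(B)=MO_*\bigl(BO^{\times|J|}\bigr)\cong MO_*\bigl[\,a_i^{(V)}:V\in J,\ i\geq 1\,\bigr],
\]
a polynomial $MO_*$-algebra with one generator $a_i^{(V)}$ in degree $i$ for each $V\in J$ and each $i\geq 1$. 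Tensoring with $AO_*(G)\cong\mathbb{Z}[e_V,e_V^{-1}]_{V\in J}$ over $\mathbb{Z}$ (note that $MO_*(B)$ is an $\mathbb{F}_2$-algebra, so this agrees with $\otimes_{\mathbb{F}_2}\mathbb{F}_2[e_V^{\pm1}]$) gives
\[
MO_*(B)\otimes AO_*(G)\cong MO_*\bigl[\,a_i^{(V)}\,\bigr]\bigl[\,e_V^{\pm1}\,\bigr].
\]

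It remains to rewrite this in the form demanded by the statement. In $AO_*(G)$ the class $e_V$ has native degree $-|V|=-1$, since every non-trivial irreducible $G$-representation is one-dimensional; the statement adopts instead the normalization in which $e_V$ has degree $1$. With this grading put
\[
Y_{d,V}\colonequals e_V\cdot a_{d-1}^{(V)}\qquad(V\in J,\ d\geq 2),
\]
a homogeneous element of degree $(d-1)+1=d$. Since $a_{d-1}^{(V)}=e_V^{-1}\,Y_{d,V}$, the family $\{e_V^{\pm1}:V\in J\}\cup\{Y_{d,V}:V\in J,\ d\geq 2\}$ generates $MO_*[a_i^{(V)}][e_V^{\pm1}]$ over $MO_*$ and satisfies no relation beyond $e_Ve_V^{-1}=1$; hence $e_V\mapsto e_V$, $Y_{d,V}\mapsto e_V a_{d-1}^{(V)}$ defines the asserted isomorphism of graded rings between $MO_*[e_V,e_V^{-1},Y_{d,V}]_{V\in J,\,d>1}$ and $MO_*(B)\otimes AO_*(G)$. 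The restriction $d>1$ is exactly what is needed for the index $d-1$ to run over the positive integers, i.e.\ over a full set of polynomial generators of $MO_*(BO)$. (In \cite[Theorem 4.10]{S01} the $Y_{d,V}$ are pinned down by an explicit formula; this differs from the naive choice above only by correction terms of lower order in the $a_i^{(V)}$, and for the present statement either choice serves.)

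The only point requiring genuine care rather than bookkeeping is the very first one: that $MO_*(BO)\cong MO_*[a_1,a_2,\dots]$ holds as graded \emph{rings}, not merely as graded $MO_*$-modules — equivalently, that the Thom splitting of $MO$ is compatible with multiplicative structures, or alternatively that one identifies the Pontryagin ring by a Thom-isomorphism argument (dually, $MO^*(BO)$ carries $MO$-theoretic Stiefel--Whitney classes obeying the Whitney sum formula). This is classical and is precisely the input Sinha uses; granting it, the Künneth step, the regrading, and the passage to the variables $Y_{d,V}$ are all formal, and indeed Sinha's complex argument transports essentially verbatim.
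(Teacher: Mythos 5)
Your argument is essentially the paper's: a Künneth reduction to $MO_*(BO)$, the classical polynomial structure $MO_*(BO)\cong MO_*[X_i]_{i\geq 1}$ (the paper quotes Conner--Floyd/Kochman where you invoke the Thom splitting of $MO$ --- the same classical input, and the multiplicativity caveat you flag is the right one), and then an invertible change of variables using the units $e_V^{\pm1}$. The one substantive divergence is the change of variables itself: you set $Y_{d,V}=e_V\cdot a_{d-1}^{(V)}$, having taken the proposition's printed degrees ($e_V$ in degree $+1$) at face value, whereas the paper sets $Y_{d,V}=X_{d-|V|,V}\cdot e_V^{-1}$. The printed degree assignment is a slip in the statement: everywhere else $e_V$ has degree $-|V|=-1$ (it is the image of the Euler class $\varepsilon_V\in MO^G_{-|V|}$ under the graded map $\phi_{MO}$, Lemma \ref{evphiev}, and the proof of Proposition \ref{kappa1} computes $\kappa_{\mathfrak N}$ of a $1$--dimensional geometric class to be $e_V^{-1}$), and with that grading your $Y_{d,V}$ sits in degree $d-2$ rather than $d$, while the paper's sits in degree $d$ without any regrading of $AO_*(G)$. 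For the bare existence of a graded ring isomorphism either set of new variables works, as you note; but the specific element named $Y_{d,V}$ is fixed by this proof and used later (for instance the relation $X_{d,V}=Y_{d+|V|,V}\cdot e_V$ in Section \ref{geomap} and the computation of $\kappa_{\mathfrak N}^{-1}(Y_{d,V})$ in Proposition \ref{kappa1}), so you should adopt the paper's normalization $Y_{d,V}=X_{d-|V|,V}\cdot e_V^{-1}$ rather than resolve the inconsistency by regrading the left-hand side.
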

\begin{proof}Since $J$ is finite we have the isomorphism of the Künneth formula \[MO_*(\prod_{V\in J}BO)\cong\bigotimes_{MO_*}^{V\in J}MO_*(BO).\]
Conner and Floyd used the Atiyah--Hirzebruch spectral sequence to calculate $MO_*(-)$, see \cite[Theorem 8.3, Theorem 17.1]{CF79}. For $MO_*(BO)$ we obtain
\begin{align*}MO_*(BO)&\cong MO_*{[X_i]}_{1\leq i},\end{align*}
 where each generator $X_i$ has degree $i$, (compare \cite[Propositions 2.3.7 and 2.4.3]{K96}). 
With the identification $AO_*(G)\cong\mathbb{Z}{[e_V,e^{-1}_V]}_{V\in J}$ we get an isomorphism
\begin{align*}
\begin{split}
MO_*(B)\otimes AO_*(G)& \cong MO_*{[e_V,e_V^{-1},X_{i,V}]}_{V\in J, 1\leq i} \\
&\cong MO_*{[e_V,e_V^{-1},X_{d-|V|,V}\cdot e_V^{-1}]}_{V\in J, 1+|V|\leq d}. \label{polyiso} 
\end{split}\tag{$\star$} 
\end{align*}

\begin{definition}
 For a $G$--representation $V\in J$ and $1+|V|\leq d$ we set \[Y_{d,V}\colonequals X_{d-|V|,V}\cdot e_V^{-1}.\] \end{definition}
We identify $Y_{d,V}$ with the image of \[X_{d-|V|}\otimes e^{-1}_V\in MO_{d-|V|}(BO)\otimes AO_{|V|}(G) \]
under the inclusion of $BO$ as $V$--th factor in $B$ viewed, via the isomorphism \eqref{polyiso}, as an element in $MO_*[e_V,e_V^{-1},X_{i,V}].$
 Notice that $Y_{d,V}$ is defined in such a way that its dimension is $d$, and that all representations in $J$ have dimension one. 
With this definition we get the desired isomorphism\[MO_*(B)\otimes AO_*(G)\cong MO_*{[e_V,e_V^{-1},Y_{d,V}]}_{V\in J, d>1}.\] 
\end{proof} 

\begin{definition}\label{phidef}
 We combine the results of Propositions \ref{fixequi} to \ref{kochman} to define a map
\[\phi_{MO}\co MO^G_*\to MO_*[e_V,e^{-1}_V,Y_{d,V}]\] as follows:
\[\xymatrix{MO_*^G\ar[rr]^-{\text{restriction}}_-{\text{to fixed sets}}&&\Phi^GMO^G_*\ar[rr]^-{\cong}_-{\text{Proposition \ref{fixequi}}}&&(I_{RO(G)}\sma MO\sma B_+)_*\\
\ar[rr]^-{\cong}_-{\text{Proposition \ref{MOBiso}}}&& MO_*(B)\otimes AO_*(G)\ar[rr]^{\cong}_-{\text{Proposition \ref{kochman}}} &&MO_*[e_V,e_V^{-1},Y_{d,V}]}\] where the first map is given by restriction to fixed points. Not including the last isomorphism we get a map 
\[\tilde\phi_{MO}\co MO^G_*\to MO_*(B)\otimes AO_*(G).\]
\end{definition}

\subsection{Localization}\label{fixinj}
The reason for naming the indeterminates $e_V$ is explained in the lemma below, which can be proved by chasing through the definition of the map $\phi_{MO}$ and the Euler classes $\varepsilon_V$.
\begin{lemma}[compare {\cite[p.\ 685]{H05}}]\label{evphiev}
 For a non-trivial irreducible representation $V$ and the corresponding Euler class $\varepsilon_V\in MO^G_*$ we have
\[\phi_{MO}(\varepsilon_V)=e_V\in MO_*[e_V,e_V^{-1},Y_{d,V}].\]
Notice that $\phi_{MO}(\varepsilon_V)$ is invertible in $MO_*[e_V,e_V^{-1},Y_{d,V}]$.
\end{lemma}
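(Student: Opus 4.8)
The plan is to prove the lemma by a direct diagram chase: one evaluates the composite defining $\phi_{MO}$ on the element $\varepsilon_V$, following it through restriction to $G$--fixed sets and then through the successive identifications of Propositions \ref{fixequi}, \ref{MOBiso} and \ref{kochman}.

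First recall that $\varepsilon_V$ is represented by the composite $S^0\to S^V\to T(\xi^G_{|V|})$ in which the first map is induced by $0\to V$ and the second classifies $V$, viewed as a $|V|$--dimensional $G$--vector bundle over a point. Since $G=(\mathbb{Z}/2)^l$, the representation $V\in J$ is one--dimensional with $V^G=0$; hence on passing to $G$--fixed points the first map becomes the identity $S^0\to S^0$, and the restriction of $\varepsilon_V$ to fixed sets is simply the fixed--point map $c^G\co S^0\to (T(\xi^G_{|V|}))^G$ of the classifying map $c$. The next step is to insert this into the description of Lemma \ref{thomfix}: the isotypical decomposition of the bundle $V$ over a point has the $V$--isotypical piece equal to the trivial real line bundle and every other isotypical summand zero, so $c^G$ factors through the single wedge summand indexed by $W=V$, namely
\[
T(\xi_{|V^G|})\sma\Bigl(\prod_{U\in J}BO(\nu_U(V))\Bigr)_+ = T(\xi_0)\sma BO(1)_+ \simeq BO(1)_+ ,
\]
and inside this summand $c^G$ hits the point of $BO(1)$ classifying the trivial line bundle.

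Then I would pass to the spectrum level. Under the equivalence $\Phi^GMO^G\simeq I_{RO(G)}\sma MO\sma B_+$ of Proposition \ref{fixequi}, the wedge summand of $(T(\xi^G))^G$ indexed by the representation $W=V$ is, after stabilization, matched with the summand of $I_{RO(G)}\sma MO\sma B_+$ attached to the virtual representation $V-\underline{|V|}=V-\underline{1}$ of virtual dimension zero, the chosen point of $BO(1)\hookrightarrow BO$ (the $V$--th factor of $B$, all others at their basepoints) mapping to the unit $1\in MO_*(B)$. Applying Proposition \ref{MOBiso} with $W-U=V-\underline{1}$, so that $(W^G)^\perp=V$ and $(U^G)^\perp=0$, identifies this summand with $MO_*(B)\otimes(e_V\cdot e^{-1}_0)=MO_*(B)\otimes e_V$; thus $\tilde\phi_{MO}(\varepsilon_V)=1\otimes e_V\in MO_*(B)\otimes AO_*(G)$, and the isomorphism of Proposition \ref{kochman} turns this into $\phi_{MO}(\varepsilon_V)=e_V$. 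Invertibility of $\phi_{MO}(\varepsilon_V)=e_V$ is then immediate, since $e_V$ was adjoined together with an inverse.

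The one place where genuine care is needed — and what I expect to be the only real obstacle — is the bookkeeping in the second half: one must be sure that the wedge summand selected by $c$ on fixed sets is paired with the correct summand of $I_{RO(G)}$ in Proposition \ref{fixequi}, and hence with the monomial $e_V$ rather than $e^{-1}_V$ after Propositions \ref{MOBiso} and \ref{kochman}. This comes down to fixing the stabilization convention — comparing a representation $W$ with the trivial representation $\underline{|W|}$, so that $W=V$ contributes the class of $V-\underline{1}$ — after which nothing is difficult.
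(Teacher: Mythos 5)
Your proposal is correct and is essentially the paper's own argument: the paper gives no written proof beyond the remark that the lemma follows by chasing $\varepsilon_V$ through the definition of $\phi_{MO}$, and your proof carries out exactly that chase. Your resolution of the one delicate point — that the fixed‑point class of $\varepsilon_V$ lies in the wedge summand indexed by $W=V$ at Thom‑space level with trivial source sphere, hence corresponds under Proposition \ref{MOBiso} to the summand $MO_*(B)\otimes e_{(W^G)^\perp}=MO_*(B)\otimes e_V$ and not to $e_V^{-1}$ — agrees with the paper's conventions (as confirmed by the geometric maps, where normal‑bundle data produce negative powers $e_V^{-1}$), so nothing is missing.
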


We give an alternative description of $\phi_{MO}$.
The key steps are the following two results by tom Dieck. Also compare the complex version of the following proposition \cite[Corollary 5.2]{S01}.
\begin{propo}[see {\cite[Theorem 1(b)]{tD71}}]
Let $S$ be the set of Euler classes of non-trivial irreducible representations in $MO_*^G$. Then the localization map into the ring of quotients
$\lambda\co MO_*^G\to S^{-1}MO_*^G$ is injective. 
\end{propo}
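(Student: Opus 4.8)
## Proof proposal

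The plan is to reduce the statement to the claim that each Euler class $\varepsilon_V$, $V\in J$, is a non-zero-divisor in $MO_*^G$, i.e.\ that $\operatorname{Ann}(\varepsilon_V)=0$. Since $S$ is the multiplicative closure of $\{\varepsilon_V\mid V\in J\}$ and, in a commutative ring, a product of non-zero-divisors is again one, this reduction is immediate: if no $\varepsilon_V$ has an annihilator, then no element of $S$ annihilates a nonzero class of $MO_*^G$, so $\lambda$ is injective. (Equivalently, this is the vanishing of the map $MO_*^G[\mathcal{P}]\to MO_*^G$ in the tom Dieck exact sequence for $\mathcal{P}\subset\mathcal{A}$, once one observes that $E\mathcal{P}\simeq\colim_n S(nW)$ with $W=\bigoplus_{V\in J}V$.)

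So I would fix $V\in J$. As $G=(\mathbb{Z}/2)^l$, the representation $V$ is one--dimensional; let $H$ be the isotropy group of a nonzero vector of $V$, a subgroup of index two, so that $S(V)\cong G/H$ as a $G$--set. I would then smash the standard cofiber sequence of based $G$--spaces
\[S(V)_+\longrightarrow S^0\stackrel{\varepsilon_V}{\longrightarrow}S^V\]
with $MO^G$, pass to $\pi^G_*$, and use the periodicity isomorphism $\Sigma^VMO^G\simeq\Sigma^{|V|}MO^G$ together with $\pi^G_*\bigl((G/H)_+\sma MO^G\bigr)\cong MO_*^H$. This yields an exact sequence
\[MO_*^H\stackrel{\operatorname{tr}}{\longrightarrow}MO_*^G\stackrel{\cdot\,\varepsilon_V}{\longrightarrow}MO_{*-1}^G,\]
whose first map is the transfer (on representatives, an $H$--manifold $N$ is sent to $G\times_HN$). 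Hence $\operatorname{Ann}(\varepsilon_V)=\operatorname{im}(\operatorname{tr})$, and everything comes down to showing that $\operatorname{tr}\co MO_*^H\to MO_*^G$ is the zero map.

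Two observations would finish this. First, $[G/H]=0$ in $MO_0^G$: the orbit $G/H$ is the unit sphere $S(V)=\partial D(V)$ of the disc $D(V)$, which is a compact $G$--manifold, so $G/H$ bounds equivariantly and hence $[G/H]=0$ already in $\mathfrak{N}_0^G$, whence $\PT([G/H])=0$. Second, since $G=(\mathbb{Z}/2)^l$ is an $\mathbb{F}_2$--vector space, the codimension-one subspace $H$ is a direct summand, $G=H\oplus\langle g\rangle$, so the projection $p\co G\to H$ gives an inflation homomorphism $\operatorname{infl}_p\co MO_*^H\to MO_*^G$ with $\operatorname{res}^G_H\circ\operatorname{infl}_p=\operatorname{id}$; in particular $\operatorname{res}^G_H$ is surjective. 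Then any $x\in MO_*^H$ has the form $\operatorname{res}^G_H(y)$, and the projection formula gives
\[\operatorname{tr}(x)=\operatorname{tr}\bigl(\operatorname{res}^G_H(y)\cdot 1\bigr)=\operatorname{tr}(1)\cdot y=[G/H]\cdot y=0.\]
Hence $\operatorname{tr}=0$, which completes the argument.

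The geometric content --- that the orbit $G/H$ bounds --- is trivial here; the real work lies in the bookkeeping of equivariant stable homotopy theory: identifying the first map of the smashed cofiber sequence with the transfer, the periodicity (Thom) isomorphism, and, most delicately, producing the inflation splitting $\operatorname{res}^G_H\circ\operatorname{infl}_p=\operatorname{id}$ (a change-of-universe argument) together with the projection formula for $MO^G$. I expect verifying $\operatorname{res}^G_H\circ\operatorname{infl}_p=\operatorname{id}$ on the nose, and not merely up to the standard identifications, to be the fussiest step; alternatively, one may simply appeal to \cite[Theorem~1(b)]{tD71}.
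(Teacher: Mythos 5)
The paper offers no proof of this proposition to compare against: it is one of the two results imported from tom Dieck and is simply cited as \cite[Theorem 1(b)]{tD71}. Your proposal is therefore a genuinely different route, namely an actual (sketched) argument, and its outline is sound. The reduction to showing that each $\varepsilon_V$ is a non-zero-divisor is valid; smashing the cofiber sequence $S(V)_+\to S^0\to S^V$ with $MO^G$ and using $S(V)\cong G/H$ (with $H=\ker V$ of index two, since $V$ is one-dimensional) does identify $\ker(\cdot\,\varepsilon_V)$ with the image of the induction map $MO_*^H\to MO_*^G$; and killing that image via $\operatorname{tr}(1)=\PT([G/H])=\PT([\partial D(V)])=0$, Frobenius reciprocity, and surjectivity of restriction is a correct plan. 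It is a good sign that the hypothesis $G=(\mathbb{Z}/2)^l$ enters exactly where it must: in the one-dimensionality of $V$, so that $S(V)$ is a single orbit bounding $D(V)$, and in the splitting $G\cong H\times\mathbb{Z}/2$ used to build the inflation section of $\operatorname{res}^G_H$; both ingredients fail for $G=\mathbb{Z}/4$, where the conclusion indeed fails (compare Section \ref{count} and Proposition \ref{toralmono}). What keeps your text from being a complete proof is the amount of standard but nontrivial machinery invoked without verification: the change-of-groups isomorphism $\pi_*^G\bigl((G/H)_+\sma MO^G\bigr)\cong MO_*^H$ as an isomorphism of $MO_*^G$--modules (needed for the projection formula), the Thom/periodicity isomorphism identifying the second map of the sequence with multiplication by $\varepsilon_V$, the compatibility of $\operatorname{tr}(1)$ with the geometric class $[G/H]$ via naturality of $\PT$ and of the change-of-groups maps, and above all the construction of inflation (pullback along $G\to H$ followed by a change of universe) satisfying $\operatorname{res}^G_H\circ\operatorname{infl}=\operatorname{id}$ --- note that equality is only needed in the colimit defining $MO_*^H$, i.e.\ up to stabilization and the canonical identifications, which is why this step, though fussy, does go through. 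All of this is available in \cite{LMS86} and \cite{M96} and none of it looks false, but it should be referenced or carried out; you flag this honestly, and the fallback of citing \cite[Theorem 1(b)]{tD71} is exactly what the paper does. What the citation buys the paper is brevity; what your argument buys is a self-contained structural explanation of why the Euler classes are non-zero-divisors precisely for elementary abelian $2$--groups.
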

The map $\tilde{\phi}_{MO}$
sends all elements of $S$ to units, since the image $\phi_{MO}(\varepsilon_V)=e_V$ of an Euler class $\varepsilon_V$ is a unit in $MO_*[e_V,e_V^{-1},Y_{d,V}]$ by Lemma \ref{evphiev}. 
Hence the universal property of localization gives rise to a unique map
\[\tilde{\Phi}_{MO}\co S^{-1}MO_*^G\to MO_*(B)\otimes AO_*(B)\] such that $\tilde{\Phi}_{MO}\circ\lambda=\tilde{\phi}_{MO}.$  We cite the following result without proof.
 \begin{propo}[see {\cite[p.\ 217]{tD71}} and {\cite[Hilfssatz 2]{td70a}}]
  The map \[\tilde{\Phi}_{MO}\co  S^{-1}MO_*^G\to MO_*(B)\otimes AO_*(B)\] is an isomorphism. 
 \end{propo}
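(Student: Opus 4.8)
The plan is to reduce everything to the single claim that the restriction--to--fixed--sets homomorphism $r\co MO^G_*\to\Phi^GMO^G_*$, which is the first arrow in Definition \ref{phidef}, exhibits its target as the localization of $MO^G_*$ at the multiplicative set $S$ of Euler classes of non-trivial irreducible representations. Granting this, $\tilde\phi_{MO}$ is $r$ followed by the two isomorphisms of Propositions \ref{fixequi} and \ref{MOBiso}; in particular, as already noted just before the proposition, $\tilde\phi_{MO}$ carries $S$ to units, so $\tilde\Phi_{MO}$ exists, and the uniqueness clause in the universal property of localization forces it to equal the composite
\[S^{-1}MO^G_*\ \xrightarrow{\ \cong\ }\ \Phi^GMO^G_*\ \xrightarrow[\ \ref{fixequi}\ ]{\ \cong\ }\ (I_{RO(G)}\sma MO\sma B_+)_*\ \xrightarrow[\ \ref{MOBiso}\ ]{\ \cong\ }\ MO_*(B)\otimes AO_*(G),\]
the first arrow being the localization isomorphism induced by $r$. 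This composite is an isomorphism, hence so is $\tilde\Phi_{MO}$.

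To prove the claim about $r$ I would invoke the presentation of geometric fixed points by an Euler--class localization. Put $\bar\rho\colonequals\bigoplus_{V\in J}V$, the reduced regular representation of $G$. Since $G=(\mathbb{Z}/2)^{l}$, every proper subgroup of $G$ is contained in the kernel of some non-trivial character, so $\bar\rho^{H}\neq 0$ for all proper $H$, while $\bar\rho^{G}=0$; hence the unit sphere $S(\bar\rho^{\oplus\infty})$ is a model for $E\mathcal{P}$, where $\mathcal{P}$ is the family of proper subgroups, and $\widetilde{E}\mathcal{P}\simeq S^{\bar\rho^{\oplus\infty}}=\colim_{n}S^{n\bar\rho}$. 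By the standard presentation of geometric fixed points one has $\Phi^GMO^G\simeq(\widetilde{E}\mathcal{P}\sma MO^G)^G$ (see \cite[Chapter XVI, Section 3]{M96} and \cite[Definition 4.3]{S01}), so, since $G$--equivariant homotopy groups commute with sequential colimits,
\[\Phi^GMO^G_*\ \cong\ \pi^G_*\bigl(\widetilde{E}\mathcal{P}\sma MO^G\bigr)\ \cong\ \colim_{n}\ \pi^G_*\bigl(\Sigma^{n\bar\rho}MO^G\bigr).\]
The bonding maps in this colimit are induced by the inclusion $S^{0}\hookrightarrow S^{\bar\rho}$, hence on homotopy they become, after the evident regrading, multiplication by the Euler class $\varepsilon_{\bar\rho}=\prod_{V\in J}\varepsilon_{V}\in MO^G_{-|\bar\rho|}$. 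As $J$ is finite, inverting $S$ is the same as inverting the single element $\varepsilon_{\bar\rho}$, so this colimit is $MO^G_*[\varepsilon_{\bar\rho}^{-1}]=S^{-1}MO^G_*$; and since $S^{0}\to\widetilde{E}\mathcal{P}$ factors through the stage $n=0$, the canonical map from $MO^G_*$ into the colimit is on one side the localization map $\lambda$ and on the other the restriction map $r$. Lemma \ref{evphiev} provides an independent check: $r(\varepsilon_{V})=e_{V}$, which is a unit.

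The main obstacle is exactly this middle identification, namely tom Dieck's localization theorem asserting that $r$ is a localization at $S$; everything flanking it is bookkeeping. Its proof rests on the concrete model $\widetilde{E}\mathcal{P}=\colim_{n}S^{n\bar\rho}$, on the interchange of $\pi^G_*$ with the colimit, and on the identification of the bonding maps with Euler multiplication --- steps that are routine but not purely formal. For these I would cite \cite[Chapters XV--XVI]{M96} together with the complex analogues \cite[Corollary 5.2 and Theorem 4.9]{S01}; the statement itself is due to tom Dieck, \cite[p.~217]{tD71} and \cite[Hilfssatz 2]{td70a}. Once it is in place, Propositions \ref{fixequi} and \ref{MOBiso} supply the remaining isomorphisms and the universal property of localization pins $\tilde\Phi_{MO}$ down as their composite, which completes the proof.
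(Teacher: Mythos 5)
Your proposal is correct in outline, but it takes a genuinely different route from the paper, which offers no proof at all: the proposition is explicitly cited from tom Dieck (\cite[p.~217]{tD71}, \cite[Hilfssatz 2]{td70a}) without argument. What you have written is essentially a proof of that cited result, via the standard geometric-fixed-points presentation: $\widetilde{E}\mathcal{P}\simeq\colim_n S^{n\bar\rho}$ (valid for $G=(\mathbb{Z}/2)^l$ precisely because every proper subgroup lies in the kernel of a non-trivial character, as you note), commuting $\pi^G_*$ past the telescope, and identifying the bonding maps with multiplication by $\varepsilon_{\bar\rho}=\prod_{V\in J}\varepsilon_V$, so that $\Phi^GMO^G_*\cong S^{-1}MO^G_*$ compatibly with $r$ and $\lambda$; the universal-property argument pinning down $\tilde\Phi_{MO}$ as the composite of isomorphisms is then routine and correct. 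The one step carrying real weight is the "evident regrading": converting the $RO(G)$-graded bonding maps $MO^G_{k-n\bar\rho}\to MO^G_{k-(n+1)\bar\rho}$ into multiplication by the integer-graded Euler class requires the invertible Thom classes (periodicity) of $MO^G$ from \cite[Chapter XV, Section 2]{M96}, and this, together with the identification of the restriction-to-fixed-sets map of Definition \ref{phidef} with the canonical map $\pi^G_*(MO^G)\to\pi^G_*(\widetilde{E}\mathcal{P}\sma MO^G)$, is exactly the content of tom Dieck's theorem; you flag it and cite it rather than prove it, which is acceptable and on par with the paper's own level of detail, but one should be clear that this is where the theorem lives rather than "bookkeeping". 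Your approach has the merit of making the proposition self-contained and consonant with techniques the paper uses elsewhere (the proof of Proposition \ref{kappa2} runs a closely parallel computation with $\overline{\Sigma}E\mathcal{P}$), whereas the paper's choice buys brevity by outsourcing the localization theorem entirely. Minor point: the target should read $MO_*(B)\otimes AO_*(G)$ (the statement's $AO_*(B)$ is a typo you silently and correctly fix).
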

Complex versions of the propositions are \cite[Theorem 3.1]{tD70} and \cite[Corollary 4.15]{S01}. For $G=\mathbb{Z}/2$ the corresponding statement is \cite[Corollary 3.19]{S02}.
Fitting it all together and composing with the isomorphism of Proposition \ref{kochman} gives the following commutative diagram:
\[\xymatrix{MO_*^G\ar[r]^-{\lambda}\ar[rd]^-{\tilde{\phi}_{MO}}\ar@/_3.25pc/[rdd]_-{\phi_{MO}} & S^{-1}MO_*^G\ar[d]^-{\tilde{\Phi}_{MO}}\ar@/^6pc/[dd]^-{\Phi_{MO}}\\
	&MO_*(B)\otimes AO_*(G)\ar[d]_-{\text{Proposition \ref{kochman}}}^\cong\\
      &MO_*[e_V,e_V^{-1},Y_{d,V}]}\]
\begin{corollary}
 We have $\Phi_{MO}\circ\lambda=\phi_{MO}$ and $\phi_{MO}$ is a monomorphism. 
\end{corollary}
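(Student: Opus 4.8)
The plan is to reduce everything to the tautological factorization that defines $\tilde\Phi_{MO}$. Write $\kappa\co MO_*(B)\otimes AO_*(G)\to MO_*[e_V,e_V^{-1},Y_{d,V}]$ for the graded ring isomorphism of Proposition \ref{kochman}. By Definition \ref{phidef} we have $\phi_{MO}=\kappa\circ\tilde\phi_{MO}$, and by its very construction $\Phi_{MO}=\kappa\circ\tilde\Phi_{MO}$. So it suffices to prove the two assertions one level down, for $\tilde\Phi_{MO}$ and $\tilde\phi_{MO}$: that $\tilde\Phi_{MO}\circ\lambda=\tilde\phi_{MO}$ and that $\tilde\phi_{MO}$ is a monomorphism. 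Post-composing with $\kappa$ then gives $\Phi_{MO}\circ\lambda=\kappa\circ\tilde\Phi_{MO}\circ\lambda=\kappa\circ\tilde\phi_{MO}=\phi_{MO}$, and since $\kappa$ is an isomorphism it carries a monomorphism to a monomorphism.

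The identity $\tilde\Phi_{MO}\circ\lambda=\tilde\phi_{MO}$ costs nothing: it is exactly the property with which $\tilde\Phi_{MO}$ was defined. By Lemma \ref{evphiev} (read through $\kappa^{-1}$) the map $\tilde\phi_{MO}$ sends each Euler class $\varepsilon_V$ to a unit, hence sends every element of the multiplicative set $S$ to a unit; the universal property of the localization $\lambda\co MO_*^G\to S^{-1}MO_*^G$ then produces a unique ring homomorphism $\tilde\Phi_{MO}\co S^{-1}MO_*^G\to MO_*(B)\otimes AO_*(G)$ with $\tilde\Phi_{MO}\circ\lambda=\tilde\phi_{MO}$, and this is the very map occurring in the commutative diagram above.

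For the monomorphism claim I would simply chase the factorization $\phi_{MO}=\kappa\circ\tilde\Phi_{MO}\circ\lambda$. Here $\lambda$ is injective by tom Dieck's localization theorem (the proposition citing \cite[Theorem 1(b)]{tD71}); $\tilde\Phi_{MO}$ is an isomorphism by the subsequent proposition (citing \cite[p.\ 217]{tD71} and \cite[Hilfssatz 2]{td70a}); and $\kappa$ is an isomorphism by Proposition \ref{kochman}. A composite of a monomorphism with two isomorphisms is a monomorphism, so $\phi_{MO}$ is injective. Equivalently, $\Phi_{MO}=\kappa\circ\tilde\Phi_{MO}$ is itself an isomorphism of graded rings, so $\phi_{MO}=\Phi_{MO}\circ\lambda$ is injective because $\lambda$ is.

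I expect no genuine obstacle here; all of the mathematical content has already been packaged in the two cited results of tom Dieck (injectivity of the localization at the Euler classes, and the identification of $S^{-1}MO_*^G$ with $MO_*(B)\otimes AO_*(G)$) together with the computational Propositions \ref{fixequi} through \ref{kochman}. The corollary is pure diagram bookkeeping: transport the tautological triangle $\tilde\Phi_{MO}\circ\lambda=\tilde\phi_{MO}$ across the isomorphism $\kappa$, and read off injectivity from the observation that the only non-invertible arrow in the factorization of $\phi_{MO}$, namely $\lambda$, is a monomorphism. The one thing worth keeping straight is that ``$\tilde\phi_{MO}$ inverts $S$'' is invoked in exactly the shape the universal property of $\lambda$ requires --- which is precisely the remark made in the paragraph immediately before the statement of the corollary.
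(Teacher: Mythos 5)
Your proposal is correct and follows essentially the same route as the paper: the corollary is read off from the commutative triangle $\tilde\Phi_{MO}\circ\lambda=\tilde\phi_{MO}$ transported through the isomorphism of Proposition \ref{kochman}, with injectivity coming from tom Dieck's localization theorem for $\lambda$ and the fact that $\tilde\Phi_{MO}$ (hence $\Phi_{MO}$) is an isomorphism. Nothing is missing.
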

\subsection{Map from geometric bordism}\label{geomap}
Next we want to construct a map \[\phi_{\mathfrak{N}}\co \mathfrak{N}^G_*\to MO_*[e_V^{-1},Y_{d,V}].\]
See \cite[Proposition 3]{H05} for the analogous construction in the complex case. 
Let $M^n$ be a manifold representing an element $[M]\in\mathfrak{N}_*^G$ and let $F\subset M^G$ be a connected component of the fixed set  of $M$. Then $F$ is embedded in $M$. 
The normal bundle $\nu_F^M$ of $F$ in $M$ is a real $G$--vector bundle of dimension $m$ and only the zero vector is fixed by the $G$--action on each fiber. This bundle decomposes as follows:
\[\nu_F^M=\bigoplus_{k=1}^{|J|}E_k\otimes_\mathbb{R} V_k\] for real vector bundles $E_k$ and irreducible $G$--representations $V_k.$ Notice that for the groups we consider $|J|=2^l-1$.
Define \[b_F\colonequals\overline{b}_F\otimes(e^{-|E_1|}_{V_1}\cdot \dots \cdot e^{-|E_{|J|}|}_{V_{|J|}})\in MO_{n-m}(B)\otimes AO_m(G),\] where $\overline{b}_F\in MO_{n-m}(B)$ is represented by a map $F\to B$ with $V_k$--th component the classifying map for $E_k.$ 
Altogether we get  the following map:
\begin{align*}\tilde\phi_\mathfrak N&\co \mathfrak N^G_*\to MO_*(B)\otimes AO_*(G)\\
 [M]&\mapsto\sum_{F\subset M^G}b_F\in (M(B)\otimes AO(G))_n.
\end{align*}
Compare tom Dieck's description of the map in \cite[Section 5]{tD71}.
Composing with the isomorphism of Proposition \ref{kochman} we get a map $\phi_\mathfrak N\co \mathfrak{N}^G_*\to MO_*[e_V,e_V^{-1},Y_{d,V}]$ and, as we will show next, its image is contained in  $MO_*[e_V^{-1},Y_{d,V}].$
  The element $\overline{b}_F\in MO_{n-k}(B)$ is represented by a map 
$F\to BO(|E_1|)\times \cdots \times BO(|E_{|J|}|),$ so $\overline{b}_F$ lies in
\begin{align*}
MO_*( \prod_{k=1}^{|J|} BO(|E_k|))&\cong\bigotimes_{MO_*}^{1\leq k\leq |J|} MO_*(BO(|E_k|))\\
 &\cong\bigotimes_{MO_*}^{1\leq k\leq |J|}MO_*[X_{1},\dots,X_{|E_k|}]&\subset\bigotimes^{1\leq k\leq |J|}_{MO_*} MO_*{[X_{d,V_k}]}_{d>0}.
\end{align*}
In fact every element in $MO_*(BO(|E_k|))$ can be written as a sum of monomials with at most $|E_k|$ factors $X_{d,V_j}.$
(Compare the classical calculations in \cite[Theorem 8.3]{CF79} and \cite[Propositions 2.4.3 and 2.3.7]{K96}.)
By definition of the $Y_{d,V}$'s we have $X_{d,V}=Y_{d+|V|, V}\cdot e_V$ and this asserts that $e_{V_k}$ appears at most $|E_k|$ times as factor in $\overline{b}_F$ and hence appears in non-negative degree (i.e.~with non-positive exponent) in $\overline{b}_F$. It follows that the sum $b_F$ lies in $MO_*[e_V^{-1}, Y_{d,V}]$ and hence so does $\phi_\mathfrak{N}([M])$.

Next we show how $\phi_{MO}$ corresponds to $\phi_\mathfrak{N}$. 
\begin{definition}[compare {\cite[p.\ 354]{tD70}}]\label{nuiota}
The inverse of the $H$--space $B$ structure gives a map 
$-^{-1}\co B\to B.$
This induces a map $\nu\co  MO_*(B)\to MO_*(B)$, which has order 2. 
Together with the isomorphism of Proposition \ref{kochman},  $\nu$ induces a map $\iota\co MO_*[e_V,e_V^{-1},Y_{d,V}]\to MO_*[e_V,e_V^{-1},Y_{d,V}]$, such that the following diagram commutes:
\[\xymatrix{MO_*(B)\otimes AO(G)\ar[d]^-{\nu\otimes \id}\ar[r]^-\cong&MO_*[e_V,e_V^{-1},Y_{d,V}]\ar[d]^-\iota\\MO_*(B)\otimes AO(G)\ar[r]^-{\cong}&MO_*[e_V,e_V^{-1},Y_{d,V}].}\]
\end{definition}
\begin{remark}
 Notice for the complex analogue of our Lemma \ref{evphiev}, namely the statement
\[\iota\circ\phi_{MU}\circ\Psi([P(\mathbb{C}^d\oplus V)])=Y_{V,d}+e_{V^*}^{-d},\]
in the notation used there \cite[p.\ 685]{H05}, an analogous map $\iota$ is used. However, since 
\[(\nu\otimes \id)(1\otimes e_V)=\nu(1)\otimes e_V=1\otimes e_V,\] we have
 $\iota\circ\phi_{MO}(\varepsilon_V)=e_V=\phi_{MO}(\varepsilon_V).$ (In general $\iota\circ\phi_{MO}\neq\phi_{MO}.$)
\end{remark}
\begin{lemma}\label{phicommu}
The following diagram commutes:
\[\xymatrix{\mathfrak{N}^G_*\ar[r]^-{\tilde{\phi}_\mathfrak{N}}\ar[d]^-{\PT}& MO_*(B)\otimes AO_*(G)\\MO^G_*\ar[r]^-{\tilde{\phi}_{MO}}&MO_*(B)\otimes AO_*(G)\ar[u]_-{\nu\otimes \id}.}\]
\end{lemma}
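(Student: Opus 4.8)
The plan is to prove the commutativity of the diagram by unwinding both composites on a representative manifold $M^n$ and comparing the two ways of passing to fixed‑point data. The essential content is a compatibility between the Pontryagin--Thom construction followed by the geometric‑fixed‑points restriction (the route $\tilde\phi_{MO}\circ\PT$) and the direct ``normal data around the fixed set'' recipe defining $\tilde\phi_\mathfrak N$, up to the sign/inverse‑$H$‑space twist $\nu\otimes\id$.

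First I would fix $[M]\in\mathfrak N^G_n$ and choose a $G$--representation $Z$ together with a $G$--embedding $M\hookrightarrow Z$, so that $\PT([M])$ is represented by $Tf\circ t\co S^Z\to T(\xi^G_{|Z|-n})$, where $t$ collapses outside a tubular neighbourhood of $M$ and $f$ classifies the normal bundle $\nu^M$ of $M$ in $Z$. To compute $\tilde\phi_{MO}(\PT[M])$ I would then apply the geometric‑fixed‑points functor $\Phi^G$: taking $G$--fixed points of $S^Z$ gives $S^{Z^G}$, and of $T(\xi^G_{|Z|-n})$ gives, via Lemma~\ref{thomfix} and Proposition~\ref{fixequi}, the wedge indexed by isomorphism classes $W\in RO^+(G)$ with the summand corresponding to $W$ being $T(\xi_{|W^G|})\sma(\prod_{V\in J}BO(\nu_V(W)))_+$, suspended by $S^{|W^G|}$ in the sense of $I_{RO(G)}$. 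The key geometric observation is that $M^G = \coprod_{F\subset M^G} F$ and that near a component $F$ the embedding $M\hookrightarrow Z$ restricts (after an equivariant tubular‑neighbourhood argument, as in \cite[Section~5]{tD71} and \cite[Proposition~4.7]{S01}) to an embedding $F\hookrightarrow Z^G$ with normal bundle $\nu^M_F$ decomposing as $\bigoplus_k E_k\otimes_\mathbb R V_k$; the $Z$--directions transverse to $Z^G$ carry the ``extra'' representation‑theoretic weight, which is exactly what the $e_V$‑powers in the $I_{RO(G)}$‑factor record. So after restriction, the collapse map $t$ breaks up, over each $F$, into the collapse onto the tubular neighbourhood of $F$ in $Z^G$ smashed with the $S^{|W^G|}$‑twist, and $Tf$ restricts to the classifying map of $E_k$ into $\prod_V BO(\nu_V(W))$. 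Reading this off through Propositions~\ref{fixequi}, \ref{MOBiso}, \ref{kochman} gives precisely $\sum_{F} \overline b_F\otimes(e_{V_1}^{-|E_1|}\cdots e_{V_{|J|}}^{-|E_{|J|}|})$ up to the $H$--space orientation issue.

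Second, I would pin down the twist: the geometric‑fixed‑points Thom‑space splitting uses a choice of $H$--space structure on $B=BO^{\times|J|}$ (equivalently an identification of $\prod_V BO(\nu_V(W))$ via Whitney sum of the $E_V$'s), and the normal‑bundle splitting coming from the $\PT$ construction is obtained from $\nu^M_F$ as a \emph{subbundle} of the restricted tangent data of $Z$, so the two classifying maps into $B$ differ exactly by the inverse $H$--space map $-^{-1}\co B\to B$. That is the content of Definition~\ref{nuiota}; composing with $\nu\otimes\id$ undoes this discrepancy, which is why $\nu\otimes\id$ appears on the $MO^G_*$ side of the square. I would also note that the $AO_*(G)$‑factors match without any twist, since $\nu$ acts as the identity on them (as observed in the Remark before the lemma).

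The main obstacle is the careful bookkeeping of the equivariant tubular‑neighbourhood and the identification of the $S^{|W^G|}$‑suspension coordinates with the ``missing'' Euler‑class factors $e_{V_k}^{-|E_k|}$: one must check that the collapse map $t$, after $\Phi^G$, lands in the correct wedge summand $W$ (namely the one with $\nu_V(W)=|E_k|$ for $V=V_k$ and with the base‑space $BO(|W^G|)$‑direction absorbing $F$), and that the resulting suspension degree is exactly $-\sum_k |E_k||V_k| = n - \dim F$ with the bundle $E_k$ contributing $\overline b_F$. This is the point where I expect to have to reproduce, in the real setting, the argument of \cite[Section~5]{tD71} and the fixed‑point analysis underlying \cite[Theorem~4.9]{S01}; once the summand and suspension degree are correctly matched, the commutativity of the square becomes a diagram chase through the already‑established isomorphisms of Propositions~\ref{fixequi}--\ref{kochman} together with the definition of $\nu$ and $\iota$. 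A secondary, more routine point is checking additivity over the components $F$ and compatibility with the group structure, which follows from the disjoint‑union description of both maps.
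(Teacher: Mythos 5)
Your proposal is correct and follows essentially the same route as the paper, which simply defers to tom Dieck's detailed proof of the complex analogue \cite[Proposition 4.1]{tD70}: unwind $\PT$ on a representative $M\hookrightarrow Z$, restrict to fixed sets using Lemma \ref{thomfix}/Proposition \ref{fixequi}, and observe that over a component $F$ the homotopic route records the non-fixed part of the normal bundle of $M$ in $Z$, i.e.\ the complement of $\nu_F^M$ inside the trivial bundle $F\times (Z^G)^\perp$, which is exactly the stable $H$--space inverse of $\bigoplus_k E_k$, so that $\nu\otimes\id$ restores $\overline{b}_F\otimes e_{V_1}^{-|E_1|}\cdots e_{V_{|J|}}^{-|E_{|J|}|}$ while the $AO_*(G)$--factor matches untwisted. (Only a trivial slip: the suspension-degree identity should read $\sum_k|E_k||V_k| = n-\dim F$, i.e.\ $-\sum_k|E_k||V_k| = \dim F - n$.)
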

A detailed proof of the analogous complex result that translates immediately to the real case is given by tom Dieck \cite[Proposition 4.1]{tD70}. 

\subsection{Bordism with respect to families of subgroups}
\begin{propo}[compare {\cite[Proposition 4]{H05}}]\label{kappa1}
 There is an isomorphism \[\kappa_\mathfrak{N}\co \mathfrak{N}_*^G[\mathcal{A},\mathcal{P}]\to MO_*[e^{-1}_V,Y_{d,V}]\] 
such that the following diagram commutes:
\[\xymatrix{&&&  MO_*[e_V^{-1},Y_{d,V}]&\\
 \mathfrak{N}_*^G[\mathcal{A}]\ar[rr]_{j_\mathfrak N}\ar[rrru]^-{\phi_\mathfrak{N}}&& \mathfrak{N}_*^G[\mathcal{A},\mathcal{P}]\ar[ru]_-{\kappa_\mathfrak{N}} &. }\]
The map $j_{\mathfrak{N}}$ comes from the Conner--Floyd exact sequence (see Section \ref{CFtDes}).
\end{propo}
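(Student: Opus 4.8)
The plan is to describe $\mathfrak{N}_*^G[\mathcal{A},\mathcal{P}]$ geometrically in terms of the normal data of $G$--fixed sets, to define $\kappa_\mathfrak{N}$ by the fixed--point formula of Section~\ref{geomap}, and to check that under the geometric description $\kappa_\mathfrak{N}$ becomes essentially the isomorphism of Proposition~\ref{kochman}. I would first \emph{define} $\kappa_\mathfrak{N}$ by the recipe that produced $\tilde\phi_\mathfrak{N}$. If $[M]\in\mathfrak{N}_*^G[\mathcal{A},\mathcal{P}]$ is represented by an $(\mathcal{A},\mathcal{P})$--manifold $M^n$, then $\partial M$ is a $\mathcal{P}$--manifold and hence has empty $G$--fixed set, so $M^G$ is a \emph{closed} submanifold in the interior of $M$ on which $G$ acts trivially; its normal bundle decomposes as $\nu_{M^G}^M\cong\bigoplus_{V\in J}E_V\otimes_\mathbb{R}V$ by the isotypical decomposition used in the proof of Lemma~\ref{thomfix}, with no trivial summand and all $E_V$ of finite dimension. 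Putting $\overline b_F$ and $b_F$ for the components $F\subset M^G$ exactly as in Section~\ref{geomap} and composing $\sum_F b_F$ with the isomorphism of Proposition~\ref{kochman} defines $\kappa_\mathfrak{N}([M])$; the degree count of Section~\ref{geomap} (each $e_V$ occurs at most $|E_V|$ times, hence with non-positive exponent) shows that the image lies in $MO_*[e^{-1}_V,Y_{d,V}]$. Well-definedness and $\mathfrak{N}_*$--linearity follow because the $G$--fixed set of an $\mathcal{A}$--manifold $W$ realizing a bordism of $(\mathcal{A},\mathcal{P})$--manifolds is a compact manifold $W^G$ which misses the $\mathcal{P}$--manifold part $\partial_1 W$ of $\partial W$, so that $\partial(W^G)$ is the disjoint union of the fixed sets of the ends while $\nu^W_{W^G}$ restricts to the given normal data; thus $W^G$ with its normal data is a bordism of fixed--point data.

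Commutativity of the triangle is then tautological: $j_\mathfrak{N}$ sends a closed $\mathcal{A}$--manifold, viewed as an $(\mathcal{A},\mathcal{P})$--manifold with empty boundary, to its own class, and $\kappa_\mathfrak{N}$ and $\phi_\mathfrak{N}$ are computed by the same fixed--point formula, whence $\kappa_\mathfrak{N}\circ j_\mathfrak{N}=\phi_\mathfrak{N}$. The substance lies in showing $\kappa_\mathfrak{N}$ bijective. For this I would introduce the graded group
\[\mathcal{D}_*\colonequals\bigoplus_{(n_V)_{V\in J}}\mathfrak{N}_{*-\sum_V n_V}\Bigl(\prod_{V\in J}BO(n_V)\Bigr)\]
of bordism classes of pairs $(F,(E_V)_{V\in J})$ with $F$ a closed manifold and $E_V\to F$ real of dimension $n_V$, together with the map $\rho\co\mathfrak{N}_*^G[\mathcal{A},\mathcal{P}]\to\mathcal{D}_*$ sending $[M]$ to the class of $(M^G,(E_V))$, and the map
\[\Theta\co\mathcal{D}_*\to\mathfrak{N}_*^G[\mathcal{A},\mathcal{P}],\qquad (F,(E_V))\longmapsto\Bigl[D\Bigl(\bigoplus_{V\in J}E_V\otimes_\mathbb{R}V\Bigr)\Bigr]\]
taking normal data to the disk bundle of the associated $G$--vector bundle. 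Both are well defined: $\rho$ by the fixed--set argument above, and $\Theta$ because every $V\in J$ is a non-trivial character of $G$, so a non-zero vector in a fibre of $\bigoplus_V E_V\otimes_\mathbb{R}V$ has isotropy contained in the proper subgroup $\ker(G\to O(V_0))$ of $G$ for some $V_0$; hence the sphere bundle is a $\mathcal{P}$--manifold and the disk bundle an $(\mathcal{A},\mathcal{P})$--manifold, and the assignment visibly respects the bordism relation.

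Since the $G$--fixed set of $D(\bigoplus_V E_V\otimes_\mathbb{R}V)$ is $F$ with normal data $(E_V)$, one has $\rho\circ\Theta=\mathrm{id}$. The converse identity $\Theta\circ\rho=\mathrm{id}$, i.e.\ that every $(\mathcal{A},\mathcal{P})$--manifold $M$ is $(\mathcal{A},\mathcal{P})$--bordant to $D(\nu^M_{M^G})$, is the classical ``germ around the fixed set'' statement: the bordism is obtained by ``digging a trench'' into $M\times[0,1]$ over the complement of an equivariant tubular neighbourhood of $M^G$, so that the trench walls assemble into a $\mathcal{P}$--manifold. Thus $\Theta=\rho^{-1}$ is an isomorphism, and I would conclude by noting that $\kappa_\mathfrak{N}\circ\Theta\co\mathcal{D}_*\to MO_*[e^{-1}_V,Y_{d,V}]$ is exactly the isomorphism built from the Künneth formula, Conner and Floyd's identification of $MO_*(BO(n))$ with the span of the monomials in the $X_i$ having at most $n$ factors, and the substitution $Y_{d,V}=X_{d-|V|,V}\cdot e^{-1}_V$ of Proposition~\ref{kochman}; hence $\kappa_\mathfrak{N}$ is an isomorphism.

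The main obstacle is the geometric identity $\Theta\circ\rho=\mathrm{id}$: one must know that, up to bordism, an $(\mathcal{A},\mathcal{P})$--manifold carries exactly the information of the normal bundle of its $G$--fixed set. No equivariant transversality enters here, and the finiteness of $G$ makes the argument lighter than in Hanke's $(S^1)^{n}$--setting; the underlying bordism construction is classical, so I would cite \cite[Sections 5 and 6]{CF66}, \cite[Section 5]{tD71} and the complex analogue \cite[Theorem 4.9]{S01} rather than reproduce it in full.
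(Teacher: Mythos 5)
Your proof is correct, and its core is the same as the paper's: you define $\kappa_\mathfrak{N}$ by the fixed-set/normal-data recipe of Section \ref{geomap}, get commutativity of the triangle for free, and rest everything on the same geometric lemma that an $(\mathcal{A},\mathcal{P})$--manifold is $(\mathcal{A},\mathcal{P})$--bordant to the disc (tubular neighbourhood) bundle of the normal bundle of its fixed set, citing Conner--Floyd as the paper does. Where you diverge is in how bijectivity is organized: the paper writes down an inverse directly on the polynomial generators, sending $e_V^{-1}$ to the disc bundle of $V$ over a point and $Y_{d,V}$ to the disc bundle of $E_{d-|V|}\otimes V$, extends it as an $\mathfrak{N}_*$--module map, and asserts it is ``clearly'' a two-sided inverse; you instead interpose the normal-data bordism group $\mathcal{D}_*=\bigoplus_{(n_V)}\mathfrak{N}_{*-\sum n_V}(\prod_V BO(n_V))$, prove $\rho$ and $\Theta$ are mutually inverse, and then identify $\kappa_\mathfrak{N}\circ\Theta$ with the algebraic isomorphism of Proposition \ref{kochman} by matching monomial bases (each monomial $\prod e_V^{-m_V}\prod Y_{d,V}^{a_{d,V}}$ arising from exactly one tuple $(n_V)$ with $n_V=m_V+\sum_d a_{d,V}$). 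This is the classical Conner--Floyd/Stong style description of $\mathfrak{N}_*^G[\mathcal{A},\mathcal{P}]$, and it buys a more systematic justification of exactly the point the paper leaves terse — that the generator-wise disc-bundle assignment really extends to a well-defined two-sided inverse on all monomials — at the cost of introducing the auxiliary group $\mathcal{D}_*$; the paper's version is shorter but leans on the reader to perform essentially the basis count you make explicit. Your well-definedness arguments for $\rho$ and $\Theta$ (fixed sets of bordisms, isotropy of nonzero vectors in $\bigoplus E_V\otimes V$ lying in kernels of nontrivial characters) are sound, so the proposal stands as a complete variant of the paper's proof.
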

\begin{proof}
The isomorphism $\kappa_\mathfrak{N}$ is constructed as follows. A manifold  $N$ that represents an element $[N]\in \mathfrak{N}^G_n[\mathcal{A},\mathcal{P}]$ is bordant to every closed tubular neighborhood of its fixed set 
$M\colonequals N^G$, which lies in the interior of $N$, since there are no fixed points on the boundary. This can be seen by a straithening-the-angle argument and then giving the bordism explicitly as explained in \cite[Lemma (5.2)]{CF66}. From here one proceeds exactly as in the definition of the map $\phi_{\mathfrak{N}}$ (see Section \ref{geomap}), which ensures the commutativity of the diagram immediately. 
 To see that $\kappa_\mathfrak{N}$ is an isomorphism we give an inverse 
\[\kappa_\mathfrak{N}^{-1}\co MO_*[e_V^{-1},Y_{d,V}]\to\mathfrak{N}_*^G[\mathcal{A},\mathcal{P}].\]
The element $e^{-1}_V$ is sent to the class of the disc bundle of $V$ viewed as a bundle over a point. Since $V$ does not contain the trivial representation its unit disc bundle $D(V)$  has boundary $S(V)$ without fixed points. Then $\kappa_\mathfrak{N}$ sends this bundle back to $e_V^{-1}$, since we have the decomposition $\mathbb{R}\otimes_\mathbb{R}V\to *$ and the class of the map $*\to BO(1)$ classifying $\mathbb{R}$ gives $1\in MO_*(B)$,  so 
\[\kappa_\mathfrak{N}([V\to *])=1\otimes e_V^{-|\mathbb{R}|}=e_V^{-1}.\]
On $Y_{d,V}$ the inverse $\kappa_{\mathfrak N}^{-1}$ is constructed as follows:
Let $E_{d-|V|}$ denote the line bundle representing the generator $X_{d-|V|}$ (compare the proof of Proposition \ref{kochman}). Then $\kappa_\mathfrak N^{-1}(Y_{d,V})$ is defined to be the class of the disc bundle of $E_{d-|V|}\otimes V.$
As above we get
\[\kappa_\mathfrak N([E_{d-|V|}\otimes V])=X_{d-|V|}\otimes e_V^{|E_{d-|V|}|}=X_{d-|V|}\otimes e_V^{-1}=Y_{d,V}.\]
Now $\kappa_\mathfrak N^{-1}$ is defined by requiring it to be a homomorphism of $\mathfrak N_*$--modules. Clearly $\kappa_\mathfrak N^{-1}$ is a right and a left inverse of $\kappa_\mathfrak N.$
\end{proof}
\begin{propo}[compare {\cite[Proposition 4]{H05}}]\label{kappa2}
 There is an isomorphism \[\kappa_{MO}\co MO_*^G[\mathcal{A},\mathcal{P}]\to MO_*[e_V,e^{-1}_V,Y_{d,V}]\] 
such that the following diagram commutes:
\[\xymatrix{&&&  MO_*[e_V, e_V^{-1},Y_{d,V}]&\\
 MO^G_*[\mathcal{A}]\ar[rr]_-{j_{MO}}\ar[rrru]^-{\phi_{MO}}&& MO_*^G[\mathcal{A},\mathcal{P}]\ar[ru]_-{\kappa_{MO}} &. }\]
The map $j_{MO}$ comes from the tom Dieck exact sequence (see Section \ref{CFtDes}).
\end{propo}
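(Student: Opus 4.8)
The plan is to identify $MO_*^G[\mathcal{A},\mathcal{P}]$ with the homotopy groups of the geometric fixed point spectrum $\Phi^GMO^G$ and then to compose with the isomorphisms of Propositions \ref{fixequi}, \ref{MOBiso} and \ref{kochman}. This is the homotopic counterpart of the geometric construction used to prove Proposition \ref{kappa1}, but since elements of $MO_*^G$ are no longer represented by manifolds the whole argument has to be run on the spectrum level.

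First I would observe that, as $\mathcal{A}$ is the family of all subgroups, $E\mathcal{A}$ is $G$-contractible, whence $MO_*^G[\mathcal{A}] = MO_*^G(E\mathcal{A}) = MO_*^G$. Moreover $MO_*^G[\mathcal{A},\mathcal{P}] = MO_*^G(E\mathcal{A},E\mathcal{P}) = \widetilde{MO}_*^G(E\mathcal{A}/E\mathcal{P})$, and $E\mathcal{A}/E\mathcal{P}$ is $G$-homotopy equivalent to the isotropy separation space $\widetilde{E\mathcal{P}}$, i.e.~the cofibre of $E\mathcal{P}_+\to S^0$; hence $MO_*^G[\mathcal{A},\mathcal{P}]\cong\widetilde{MO}_*^G(\widetilde{E\mathcal{P}}) = \pi_*^G(\widetilde{E\mathcal{P}}\sma MO^G)$. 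Since $\Phi^GMO^G\simeq(\widetilde{E\mathcal{P}}\sma MO^G)^G$ by the construction of the geometric fixed point functor (see \cite[Chapter XVI, Section 3]{M96}), the last group is $\pi_*(\Phi^GMO^G)$, so we obtain a natural isomorphism $MO_*^G[\mathcal{A},\mathcal{P}]\cong\pi_*(\Phi^GMO^G)$. I would then \emph{define} $\kappa_{MO}$ as the composite of this isomorphism with the isomorphism $\Phi^GMO^G\simeq I_{RO(G)}\sma MO\sma B_+$ of Proposition \ref{fixequi}, then the isomorphism $(I_{RO(G)}\sma MO\sma B_+)_*\cong MO_*(B)\otimes AO_*(G)$ of Proposition \ref{MOBiso}, and finally the isomorphism $MO_*(B)\otimes AO_*(G)\cong MO_*[e_V,e_V^{-1},Y_{d,V}]$ of Proposition \ref{kochman}. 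Being a composite of isomorphisms, $\kappa_{MO}$ is an isomorphism, which proves the first assertion.

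For the commuting triangle I would argue as follows. The map $j_{MO}\co MO_*^G[\mathcal{A}]\to MO_*^G[\mathcal{A},\mathcal{P}]$ is induced by the morphism of pairs $(E\mathcal{A},\varnothing)\to(E\mathcal{A},E\mathcal{P})$, which on the level of $MO^G$-spectra is the map obtained by smashing $MO^G$ with $S^0\to\widetilde{E\mathcal{P}}$. Under the identifications of the previous paragraph the composite $MO_*^G = MO_*^G[\mathcal{A}]\to MO_*^G[\mathcal{A},\mathcal{P}]\cong\pi_*(\Phi^GMO^G)$, whose first arrow is $j_{MO}$, therefore coincides with the natural ``restriction to geometric fixed sets'' homomorphism $\pi_*^G(MO^G)\to\pi_*(\Phi^GMO^G)$, which is precisely the first arrow occurring in Definition \ref{phidef}. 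Since the remaining arrows making up $\kappa_{MO}$ are literally the isomorphisms of Propositions \ref{fixequi}, \ref{MOBiso} and \ref{kochman} that were used to assemble $\phi_{MO}$ in Definition \ref{phidef}, we conclude $\kappa_{MO}\circ j_{MO} = \phi_{MO}$, as claimed.

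I expect the main obstacle to be the first step: one must make the identification $MO_*^G[\mathcal{A},\mathcal{P}]\cong\pi_*(\Phi^GMO^G)$ precise enough that its compatibility both with $j_{MO}$ and with the ``restriction to fixed sets'' map of Definition \ref{phidef} is transparent. This is classical material — it underlies tom Dieck's and Sinha's computations, and in the complex case it is the homotopic half of \cite[Proposition 4]{H05} — so the task is mainly one of assembling the correct references and verifying the relevant naturality squares rather than proving something genuinely new.
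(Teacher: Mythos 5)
Your proposal is correct and follows essentially the same route as the paper: the paper likewise identifies $MO_*^G[\mathcal{A},\mathcal{P}]$ with $\Phi^GMO^G_*$ — realizing the relative term concretely via the unreduced suspension $\overline{\Sigma}E\mathcal{P}$ (which is your $\widetilde{E\mathcal{P}}$), using excision and Sinha's Lemma~4.2 where you appeal to the definition $\Phi^GX=(\widetilde{E\mathcal{P}}\sma X)^G$ — and then composes with the isomorphisms of Propositions \ref{fixequi}, \ref{MOBiso} and \ref{kochman}. The commutativity argument is also the same in substance: the paper checks on explicit representatives $f\mapsto f^G$ that $j_{MO}$ becomes the restriction-to-fixed-sets map, i.e.\ the first arrow of Definition \ref{phidef}.
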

\begin{proof}
 By definition $MO_*^G[\mathcal{A},\mathcal{P}]$ is $MO_*^G(E\mathcal{A},E\mathcal{P})$. Let $\overline{\Sigma}E\mathcal{P}$ be the unreduced suspension of $E\mathcal{P}.$
We identify $E\mathcal{P}$ with $\frac{1}{2}\times E\mathcal{P}\subset \overline{\Sigma}E\mathcal{P}$ and denote the upper cone by $C^+E\mathcal{P}\colonequals[\frac{1}{3},1]\times E\mathcal{P}\subset \overline{\Sigma}E\mathcal{P}$ and the lower cone by 
$C^-E\mathcal{P}\colonequals[0,\frac{2}{3}]\times E\mathcal{P}.$ Then $(E\mathcal{A},E\mathcal{P})\simeq(C^-E\mathcal{P},E\mathcal{P})$ and the inclusion
$(C^-E\mathcal{P},E\mathcal{P})\to (\overline{\Sigma}E\mathcal{P},C^+E\mathcal{P})$ gives an isomorphism via excision:
\[MO^G_*(E\mathcal{A},E\mathcal{P})\cong MO_*^G(\overline{\Sigma}E\mathcal{P},C^+E\mathcal{P}). \]
To calculate $MO_*^G(\overline{\Sigma}E\mathcal{P},C^+E\mathcal{P})=MO_*^G(\overline{\Sigma}E\mathcal{P})$
we apply Lemma 4.2 of \cite{S01}:
\begin{lemma} Let $Z$ be a $G$--complex such that $Z^G\simeq S^0$ and $Z^H$ is contractible for any proper subgroup $H\subsetneq G$. For a finite $G$--complex $X$ the restriction map
 \[(\Map(X,Y\sma Z))^G\to \Map(X^G,(Y\sma Z)^G)=\Map(X^G,Y^G) \]
is a homotopy equivalence. 
\end{lemma} 
\noindent For the $G$--complex $\overline{\Sigma}E\mathcal{P}$ (compare Section \ref{CFtDes}) and any proper subgroup $H\subsetneq G$, the space $(\overline{\Sigma}E\mathcal{P})^H$ is contractible by the construction of $E\mathcal{P}$ and furthermore \[(\overline{\Sigma}E\mathcal{P})^G\simeq S^0.\] 
Since $S^W$ is a finite $G$--complex, we obtain
\begin{align*}
MO_n^G(\overline{\Sigma}E\mathcal{P})&\cong\colim_W[S^W,T(\xi_{|W|+n}^G)\sma\overline{\Sigma}E\mathcal{P}]^G\\
    &\cong \colim_W [(S^W)^G, (T(\xi_{|W|+n}^G))^G]\\
    &\cong \Phi^GMO_n^G.
\end{align*}
Combining this with the isomorphism $\Phi^GMO_n^G\cong MO_*[e_V, e_V^{-1},Y_{d,V}]$ of Propositions \ref{fixequi} to \ref{kochman}, we get the desired isomorphism $\kappa_{MO}$. 
To show commutativity of the following diagram, we look at the definitions of $\phi_{MO}$ and $\kappa_{MO}$:
\[\xymatrix{MO_n^G(E\mathcal{A})=MO^G_*[\mathcal{A}]=MO^G_*\ar[rr]\ar[d]^-j && \Phi^GMO^G_n\\
	  MO^G_*(E\mathcal{A},E\mathcal{P})\ar[rr]^-{\cong}&&MO^G_*(\overline{\Sigma}E\mathcal{P}).\ar[u]^{\cong}}
\]
Here the upper horizontal map is the first map in the definition of $\phi_{MO}$; it's restriction to fixed sets. The vertical map $j$ on the left hand side comes from the tom Dieck exact sequence. Let $f$ represent an element in $MO^G_n=MO_n^G(E\mathcal{A}),$
\[f\co S^W\to TO_n(\xi_{n+|W|}^G)\sma E\mathcal{A}.\]
Restricting to fixed sets gives an element represented by 
\[f^G\co (S^W)^G\to (TO_n(\xi_{n+|W|}^G))^G.\]
On the other hand we see that
\begin{align*}j(f)\in[S^W, T(\xi^G_{n+|W|})\sma E\mathcal{A}/E\mathcal{P}]^G&=[(S^W)^G, (T(\xi^G_{n+|W|})\sma (\overline{\Sigma}E\mathcal{P}))^G]\\&=[(S^W)^G,(T(\xi_{n+|W|}^G))^G]\end{align*} 
gives the same element and the diagram commutes. Combining this with isomorphisms of Propositions \ref{fixequi} to \ref{kochman} we get $\kappa_{MO}\circ j=\phi_{MO}.$
\end{proof}
\begin{propo}[compare {\cite[Proposition 4]{H05}}]\label{kappa3}
 The following diagram commutes:
\[\xymatrix{\mathfrak{N}^G_*[\mathcal{A},\mathcal{P}]\ar[d]^-{\PT[\mathcal{A},\mathcal{P}]}\ar[rr]^-{\kappa_\mathfrak{N}}&&MO_*[e_V^{-1},Y_{d,V}]\ar[d]^i\\
	     MO^G_*[\mathcal{A},\mathcal{P}]\ar[rr]^-{\iota\circ\kappa_{MO}}&&MO_*[e_V,e_V^{-1},Y_{d,V}]}\]
\end{propo}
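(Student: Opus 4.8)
The plan is to reduce the claim to the geometry of fixed sets and to re‑use the mechanism already underlying Lemma~\ref{phicommu}. Recall from the proof of Proposition~\ref{kappa1} that every class in $\mathfrak{N}_*^G[\mathcal{A},\mathcal{P}]$ is represented by a closed tubular neighborhood $D(\mu)$ of its (closed) fixed manifold $F=D(\mu)^G$, where $\mu=\bigoplus_k E_k\otimes_\mathbb{R}V_k$ is the $G$--normal bundle of $F$; by additivity we may assume $F$ connected. Under the isomorphism of Proposition~\ref{kochman}, the horizontal map $i\circ\kappa_\mathfrak{N}$ corresponds to the assignment $[D(\mu)]\mapsto \overline{b}_F\otimes\prod_k e_{V_k}^{-|E_k|}\in MO_*(B)\otimes AO_*(G)$ coming from the recipe of Section~\ref{geomap}, where $\overline{b}_F\co F\to B$ classifies the $E_k$. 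So it suffices to evaluate the other composite $\iota\circ\kappa_{MO}\circ\PT[\mathcal{A},\mathcal{P}]$ on such a representative and to see that, under the same identification, it yields the same element.

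First I would compute $\kappa_{MO}(\PT[\mathcal{A},\mathcal{P}]([D(\mu)]))$. By the proof of Proposition~\ref{kappa2}, $\kappa_{MO}$ is, after an excision, restriction to $G$--fixed sets followed by the identifications of Propositions~\ref{fixequi}--\ref{kochman}; and by naturality of $\PT$ (Theorem~\ref{PTnt}) this fixed--set restriction applied to a Pontryagin--Thom class returns the Pontryagin--Thom class of the fixed manifold $F$ together with the classifying data of its normal bundle. This is exactly the computation in the proof of Lemma~\ref{phicommu} (equivalently tom~Dieck's \cite[Proposition~4.1]{tD70}), relativized to the pair $(\mathcal{A},\mathcal{P})$: the only difference from the recipe defining $\kappa_\mathfrak{N}$ is that the Thom--Pontryagin procedure records the stable \emph{normal}, rather than tangential, data along $F$, and on $MO_*(B)$ the transition between the two is precisely the involution $\nu$ of Definition~\ref{nuiota}. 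Identifying the target of $\kappa_{MO}$ with $MO_*(B)\otimes AO_*(G)$ via Proposition~\ref{kochman}, this shows that $\kappa_{MO}(\PT[\mathcal{A},\mathcal{P}]([D(\mu)]))$ becomes $(\nu\otimes\id)\big(\overline{b}_F\otimes\prod_k e_{V_k}^{-|E_k|}\big)$.

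It then remains to observe that $\iota$ undoes the twist. By Definition~\ref{nuiota} the square there commutes, i.e.\ under the isomorphism of Proposition~\ref{kochman} the map $\iota$ corresponds to $\nu\otimes\id$; since $\nu$, hence $\nu\otimes\id$, is an involution, applying $\iota$ to the element just obtained returns $\overline{b}_F\otimes\prod_k e_{V_k}^{-|E_k|}$, which is the value of $i\circ\kappa_\mathfrak{N}$ on $[D(\mu)]$. As $D(\mu)$ was arbitrary, $\iota\circ\kappa_{MO}\circ\PT[\mathcal{A},\mathcal{P}]=i\circ\kappa_\mathfrak{N}$, and the diagram commutes. (The asymmetry of the two horizontal targets is no obstruction: in $\overline{b}_F\otimes\prod_k e_{V_k}^{-|E_k|}$ each $e_{V_k}$ occurs only with non-positive exponent, exactly as in Section~\ref{geomap}, and this is preserved by $\nu\otimes\id$ and by $\iota$.)

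The step I expect to be the main obstacle is the middle one: carrying the fixed--set computation behind Lemma~\ref{phicommu} over to the relative situation. One must unwind both the Pontryagin--Thom construction for a manifold with boundary whose boundary has proper isotropy and the excision/suspension identifications of Proposition~\ref{kappa2} defining $\kappa_{MO}$, and check that the bundle data extracted is the $\nu$--twist of the geometric recipe rather than the recipe itself; once this is settled the rest is formal. If one prefers to sidestep the relative computation, one can instead verify commutativity only on the $MO_*$--algebra generators $\kappa_\mathfrak{N}^{-1}(e_V^{-1})=[D(V)\to\pt]$ and $\kappa_\mathfrak{N}^{-1}(Y_{d,V})=[D(E_{d-|V|}\otimes V)\to\pt]$, since all four maps are $MO_*$--algebra homomorphisms; for the first generator a degree count together with the injectivity of $\PT[\mathcal{A},\mathcal{P}]$ from Theorem~\ref{ptinj} already forces the value, while the second still needs the $\nu$--twist computation above.
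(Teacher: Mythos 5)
Your main argument is correct and is essentially the paper's own proof: reduce to the fixed set $M=N^G$ with its normal bundle decomposition, note that $\kappa_{MO}\circ\PT[\mathcal{A},\mathcal{P}]$ is computed by restricting the Pontryagin--Thom map to fixed sets (via the identifications of Proposition \ref{kappa2}), and observe that the discrepancy between the normal data recorded there and the tangential data $\overline{b}_F$ recorded by $\kappa_\mathfrak{N}$ is exactly the $H$--space inverse, so that $\iota=\nu\otimes\id$ corrects it, just as in Lemma \ref{phicommu} (tom Dieck's argument). Only your optional shortcut at the end is shaky: $\mathfrak{N}_*^G[\mathcal{A},\mathcal{P}]$ and $\kappa_\mathfrak{N}$ carry only an $\mathfrak{N}_*$--module (not ring) structure, so verifying commutativity on the algebra generators $e_V^{-1}$, $Y_{d,V}$ alone would not suffice -- one would need all monomials, which is why the paper (and your main argument) work with an arbitrary representative instead.
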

This is essentially proved in the same way as Lemma \ref{phicommu}. Notice that $\iota$ corresponds to $\nu$ there (see Definition \ref{nuiota}). 
Given an element $[N]$ in $\mathfrak{N}_n^G[\mathcal{A},\mathcal{P}]$ we construct the element $i\circ\kappa_{\mathfrak{N}}([N])$ using the same notation as in the definition of $\kappa_\mathfrak{N}.$ Then 
\[i\circ\kappa_{\mathfrak{N}}([N])=\overline{b}_M\otimes e_{V_1}^{-|E_1|}\cdots e_{V_j}^{-|E_j|}\in MO_{n-k}\otimes AO_k(G).\]
On the other hand we choose an embedding $N\to U$ into a $G$--representation \[U=U^G\oplus \bigoplus_{i\in I}V_i\] and $\PT[\mathcal A,\mathcal P]([N])$ is then represented by a map
$S^U\to T(\xi^G_{|U|-n})\sma E\mathcal{P}$ classifying the normal bundle of the embedding. Considering the map 
\[S^{U^G}\to T(\xi^G_{|U|-k)})^G\] viewed as an element in $\phi^GMO^G$ we obtain the element $\kappa_{MO}\circ \PT[\mathcal A,\mathcal P]([N]).$ We examine the normal bundles of the embeddings
\[\xymatrix @ur {M\ar[r]\ar[d]&N\ar[d]\\U^G\ar[r]&U}\] 
and get the desired conclusion 
$\iota\circ\kappa_{MO}\circ \PT[\mathcal A,\mathcal P]([N])=i\circ \kappa_\mathfrak N([N]).$
\subsection{Statement and proof of main result}
\begin{theorem}[compare {\cite[Theorem 1]{H05}}]\label{coro1}\label{thee} The following diagram commutes and is a pull-back with all maps injective:
\[\xymatrix{\mathfrak N_*^G\ar[d]^-{\PT}\ar[rr]^-{\phi_\mathfrak N}&&MO_*[e^{-1}_V,Y_{d,V}]\ar[d]^-{i}\\
                 MO_*^G\ar[rr]^-{\iota\circ\phi_{MO}} && MO_*[e_V,e^{-1}_V,Y_{d,V}]. } \]
\end{theorem}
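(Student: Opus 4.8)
The plan is to assemble ingredients already in place; nothing essentially new is needed beyond Propositions~\ref{fixequi}--\ref{kappa3} and the maps constructed above. First, \emph{commutativity}: applying the ring isomorphism of Proposition~\ref{kochman} to the commuting square of Lemma~\ref{phicommu}, and using that $\iota$ is by Definition~\ref{nuiota} precisely the map induced by $\nu\otimes\id$, one reads off $i\circ\phi_\mathfrak N=\iota\circ\phi_{MO}\circ\PT$ — which is exactly commutativity of the square in the statement, the composite $i\circ\phi_\mathfrak N$ making sense because $\phi_\mathfrak N$ does land in $MO_*[e^{-1}_V,Y_{d,V}]$, as checked in Section~\ref{geomap}. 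Next, the four edges are each injective: $\PT$ by Theorem~\ref{ptinj} (take $(X,A)=(\pt,\varnothing)$); $i$ as the inclusion of a polynomial subring into a localization of itself; $\iota$ as an involution (it is induced by $\nu$, which has order $2$ by Definition~\ref{nuiota}), hence a bijection; and $\phi_{MO}$ as a monomorphism by the corollary at the end of Section~\ref{fixinj}, which rests on tom Dieck's localization theorem. Hence $\iota\circ\phi_{MO}$ is injective, and then $\phi_\mathfrak N$ is injective too, being a factor of the composite monomorphism $i\circ\phi_\mathfrak N=\iota\circ\phi_{MO}\circ\PT$.

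So the only real work is the pull-back property. I would first reduce it to a statement about families of subgroups. Because $E\mathcal A$ is $G$--contractible, $\mathfrak N^G_*[\mathcal A]=\mathfrak N^G_*$ and $MO^G_*[\mathcal A]=MO^G_*$. Propositions~\ref{kappa1} and~\ref{kappa2} give isomorphisms $\kappa_\mathfrak N,\kappa_{MO}$ with $\kappa_\mathfrak N\circ j_\mathfrak N=\phi_\mathfrak N$ and $\kappa_{MO}\circ j_{MO}=\phi_{MO}$, while Proposition~\ref{kappa3} gives $\iota\circ\kappa_{MO}\circ\PT[\mathcal A,\mathcal P]=i\circ\kappa_\mathfrak N$. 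Thus the isomorphisms $\id,\id,\kappa_\mathfrak N,\iota\circ\kappa_{MO}$ on the four corners carry the square of the theorem onto the commuting square built from $j_\mathfrak N$, $j_{MO}$, $\PT\co\mathfrak N^G_*\to MO^G_*$ and $\PT[\mathcal A,\mathcal P]\co\mathfrak N^G_*[\mathcal A,\mathcal P]\to MO^G_*[\mathcal A,\mathcal P]$, and it suffices to prove that \emph{this} square is a pull-back, i.e.\ that the comparison map $\Phi\co\mathfrak N^G_*\to MO^G_*\times_{MO^G_*[\mathcal A,\mathcal P]}\mathfrak N^G_*[\mathcal A,\mathcal P]$, $a\mapsto(\PT(a),j_\mathfrak N(a))$, is an isomorphism.

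For this I would chase the map of long exact sequences furnished by the natural transformation $\PT$ (Theorem~\ref{PTnt}), from the Conner--Floyd sequence of $(\mathcal A,\mathcal P)$ to the tom Dieck sequence. Injectivity of $\Phi$ is immediate from injectivity of $\PT$. For surjectivity, start with a compatible pair $(x,y)$, so $j_{MO}(x)=\PT[\mathcal A,\mathcal P](y)$. Naturality and exactness give $\PT(\partial_\mathfrak N y)=\partial_{MO}(\PT[\mathcal A,\mathcal P]y)=\partial_{MO}(j_{MO}x)=0$. Now $\PT$ is injective on $\mathfrak N^G_*[\mathcal P]=\mathfrak N^G_*(E\mathcal P)$ — apply Theorem~\ref{ptinj} to a $G$--CW model of $E\mathcal P$, a filtered union of finite $G$--subcomplexes with which both bordism theories commute — so $\partial_\mathfrak N y=0$, and exactness of the Conner--Floyd sequence produces $a'\in\mathfrak N^G_*$ with $j_\mathfrak N(a')=y$. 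Then naturality gives $j_{MO}(x-\PT(a'))=j_{MO}(x)-\PT[\mathcal A,\mathcal P](j_\mathfrak N(a'))=0$; but $j_{MO}=\kappa_{MO}^{-1}\circ\phi_{MO}$ is injective, so $x=\PT(a')$ and $(x,y)=\Phi(a')$. Hence $\Phi$ is an isomorphism and the theorem follows.

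The formal part of this is a routine diagram chase; the content sits in the two injectivity inputs that let it close. The first is tom Dieck's theorem that $\PT$ is a monomorphism (Theorem~\ref{ptinj}), which I need not only for the left-hand edge but also on $\mathcal P$--bordism; the step I expect to require the most care is this use on the infinite complex $E\mathcal P$, where the ``good local properties'' hypothesis has to be matched with a passage to finite subcomplexes. The second — and the one that really drives the proof — is injectivity of $\phi_{MO}$, equivalently of $j_{MO}$, equivalently the vanishing of $i_{MO}\co MO^G_*[\mathcal P]\to MO^G_*$: it is this that removes any transversality obstruction to lifting an element of $MO^G_*$ back to $\mathfrak N^G_*$ once its Conner--Floyd boundary has been shown to vanish. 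Everything else is bookkeeping through Propositions~\ref{fixequi}--\ref{kappa3}.
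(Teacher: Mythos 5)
Your proposal is correct and follows essentially the same route as the paper: commutativity via Lemma \ref{phicommu}/Propositions \ref{kappa1}--\ref{kappa3}, injectivity from tom Dieck's Pontryagin--Thom monomorphism and the localization result behind $\phi_{MO}$, and the pull-back property by transporting the square through the isomorphisms $\kappa_\mathfrak N$ and $\iota\circ\kappa_{MO}$ and chasing the map between the Conner--Floyd and tom Dieck exact sequences of $(\mathcal A,\mathcal P)$. Your chase (using injectivity of $\PT[\mathcal P]$ and of $j_{MO}$) is exactly the diagram chase the paper leaves implicit.
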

\begin{proof}
Putting together the exact sequence of the the pair of families of subgroups $(\mathcal{A},\mathcal{P})$ (see Section \ref{CFtDes}), the natural transformation coming from the Pontryagin--Thom map (see Theorem \ref{PTnt}) 
and commutative diagrams of Propositions \ref{kappa1}, \ref{kappa3} and \ref{kappa2}, we obtain the following commutative diagram with exact horizontal rows:
\[
\xymatrix{&&&  MO_n[e_V^{-1},Y_{d,V}]\ar'[d][dd]^(.4){i}|!{[ld];[rd]}\hole &\\
 \mathfrak{N}^G_n[\mathcal{A}]\ar[dd]^-{\PT[\mathcal{A}]}\ar[rr]_-{j_{\mathfrak N}}\ar[rrru]^-{\phi_\mathfrak{N}}&& \mathfrak{N}_n^G[\mathcal{A},\mathcal{P}]\ar[dd]^-{\PT[\mathcal{A},\mathcal{P}]}\ar[ru]^(.4){\cong}_-{\kappa_\mathfrak{N}}\ar[rr]^(.65){\partial_\mathfrak{N}} &&\mathfrak{N}_{n-1}^G[\mathcal{P}]\ar[dd]^-{\PT[\mathcal{P}]}\\
&&&  MO_n[e_V, e_V^{-1},Y_{d,V}]&\\
 MO^G_n[\mathcal{A}]\ar[rr]_-{j_{MO}}\ar[rrru]^-{\iota\circ\phi_{MO}} |!{[uurr];[rr]}\hole && MO_n^G[\mathcal{A},\mathcal{P}]\ar[ru]^(.4){\cong}_-{\iota\circ\kappa_{MO}}\ar[rr]^-{\partial_{MO}} &&MO_{n-1}^G[\mathcal{P}]. }
\]
Using the isomorphisms $\kappa_\mathfrak N$ and $\iota\circ\kappa_{MO}$ to substitute ${\PT[\mathcal{A},\mathcal{P}]}$ in the middle by the inclusion $i$, gives the following commutative diagram with short exact sequences as rows: 
\[\xymatrix{0\ar[r] &\mathfrak{N}^G_n\ar[d]^-{\PT}\ar[r]^-{\phi_\mathfrak{N}} &MO_n[e_V^{-1},Y_{d,V}]\ar[d]^-{i}\ar[rr]^-{\partial_\mathfrak{N}\circ\kappa^{-1}_\mathfrak{N}} &&\mathfrak{N}^G_{n-1}[\mathcal{P}]\ar[d]^-{\PT[\mathcal{P}]}\ar[r]&0\\
0\ar[r] &MO^G_n\ar[r]^-{\iota\circ\phi_{MO}} &MO^G_n[e_V,e_V^{-1},Y_{d,V}]\ar[rr]^-{\partial_{MO}\circ(\iota\circ\kappa_{MO})^{-1}} &&MO^G_{n-1}[\mathcal{P}]\ar[r]&0.}\]
The Pontryagin--Thom maps $\PT=\PT[\mathcal{A}]$ and $\PT[\mathcal{P}]$ are injective by  Section \ref{injPT} and so is the inclusion in the middle.
From that and the injectivity of $\iota\circ\phi_{MO}$ (see Section \ref{fixinj}) the injectivity of $\phi_\mathfrak{N}$ follows. 
(The injectivity of $\phi_{\mathfrak N}$ can also be deduced from the injectivity of $j_\mathfrak N$; see Proposition \ref{toralmono}). To prove the pullback property it suffices to show that an element $x\in \im i\cap\im \iota\circ\phi_{MO}$ comes from an element in $\mathfrak{N}_n^G$, which is done by a diagram chase.
\end{proof}
We identify $MO_*[e_V^{-1},Y_{d,V}]$ as a subring of $MO_*[e_V,e_V^{-1},Y_{d,V}]$ via $i$. 
\begin{corollary}[compare {\cite[Corollary 1]{H05}}]
 The following isomorphism of $MO_*$--algebras describes geometric equivariant bordism for $G=\mathbb{Z}/2\times\cdots\times\mathbb{Z}/2$: 
\[\mathfrak N_*^G\cong \iota\circ\phi_{MO}(MO_*^G)\cap MO_*[e_V^{-1},Y_{d,V}].\]
\end{corollary}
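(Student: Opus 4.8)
The plan is to read this off from the pullback square of Theorem~\ref{coro1}. Recall the elementary fact that in a commutative square of abelian groups
\[\xymatrix{A\ar[r]^-{f}\ar[d]_-{g}&B\ar[d]^-{b}\\ C\ar[r]^-{c}&D}\]
which is cartesian and in which all four maps are injective, the top-left corner is carried by $f$ isomorphically onto the subgroup $b(B)\cap c(C)$ of $D$: the pullback is $\{(x,y)\in B\oplus C\mid b(x)=c(y)\}$, and since $b$ and $c$ are injective the rule $(x,y)\mapsto b(x)=c(y)$ is a bijection of it onto $b(B)\cap c(C)$; precomposing with the isomorphism $A\to\{(x,y)\mid b(x)=c(y)\}$, $a\mapsto(f(a),g(a))$, gives the claimed isomorphism $A\xrightarrow{\ \sim\ }b(B)\cap c(C)$, $a\mapsto b(f(a))=c(g(a))$. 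If moreover $B$, $C$, $D$ are rings, and $b$, $c$, $f$, $g$ are ring homomorphisms, then $b(B)$, $c(C)$ and hence $b(B)\cap c(C)$ are subrings of $D$ and this bijection is a ring isomorphism; the same holds with ``ring'' replaced by ``algebra over a fixed base ring''.

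Apply this to Theorem~\ref{coro1}, with $(b,c)=(i,\iota\circ\phi_{MO})$ and $(f,g)=(\phi_\mathfrak N,\PT)$. By the paragraph following that theorem the map $i$ is the inclusion of $MO_*[e_V^{-1},Y_{d,V}]$ as a subring of $MO_*[e_V,e_V^{-1},Y_{d,V}]$, so its image is literally $MO_*[e_V^{-1},Y_{d,V}]$, while the bottom map has image $\iota\circ\phi_{MO}(MO_*^G)$. Hence $\phi_\mathfrak N$ identifies $\mathfrak N_*^G$ with
\[\iota\circ\phi_{MO}(MO_*^G)\cap MO_*[e_V^{-1},Y_{d,V}],\]
which is the asserted statement at the level of abelian groups.

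To upgrade this to an isomorphism of $MO_*$--algebras one only has to observe that all objects and maps in sight are $MO_*$--algebras and homomorphisms thereof. The non-equivariant Pontryagin--Thom isomorphism $\mathfrak N_*\cong MO_*$ makes $\mathfrak N_*^G$ an $MO_*$--algebra, $\PT$ is an $MO_*$--algebra map by Theorem~\ref{PTnt}, and $\phi_\mathfrak N$ is one as well: for a product of $G$--manifolds one has $(M_1\times M_2)^G=M_1^G\times M_2^G$ and, over a component $F_1\times F_2$, the normal bundle is the external direct sum of the normal bundles of the factors, so its isotypical decomposition, the classifying maps into $B$ (combined via the $H$--space structure of $B$), and the exponents of the $e_V$ all behave additively resp.\ multiplicatively under the isomorphism of Proposition~\ref{kochman} — and $MO_*$--linearity is the same computation with one factor a non-equivariant manifold (compare \cite[Section~5]{tD71}). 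On the other side $\phi_{MO}$ is a ring homomorphism by Definition~\ref{phidef}, and $\iota$ is the ring automorphism of $MO_*[e_V,e_V^{-1},Y_{d,V}]$ corresponding under Proposition~\ref{kochman} to $\nu\otimes\id$, where $\nu$ is induced by the inverse map of the homotopy-commutative $H$--space $B$ and hence is multiplicative. Thus $\iota\circ\phi_{MO}$ and $i$ are $MO_*$--algebra maps, the intersection above is a sub--$MO_*$--algebra of $MO_*[e_V,e_V^{-1},Y_{d,V}]$, and $\phi_\mathfrak N$ is an isomorphism of $MO_*$--algebras onto it. No genuine obstacle arises here: the substantive content — commutativity, the exact rows, the identifications $\kappa_\mathfrak N$ and $\kappa_{MO}$, and the various injectivity statements — is exactly Theorem~\ref{coro1}, and the present corollary is its purely formal reformulation; the only thing left to check is the (routine) multiplicativity and $MO_*$--linearity of $\phi_\mathfrak N$ recalled above.
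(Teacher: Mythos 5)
Your argument is correct and is exactly the intended derivation: the paper deduces this corollary formally from Theorem \ref{coro1}, identifying $MO_*[e_V^{-1},Y_{d,V}]$ with its image under $i$ and using that a pullback square of injective maps identifies the top-left corner with the intersection of the two images. Your additional check that $\phi_{\mathfrak N}$ and the other maps are $MO_*$--algebra homomorphisms is a harmless elaboration of what the paper leaves implicit, so there is nothing to correct.
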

\subsection{Comparison with Sinha's results for \texorpdfstring{$G=\mathbb{Z}/2$}{G=Z/2}}\label{comp}
The description of $MO_*^{\mathbb{Z}/2}$ in \cite[The\-orem 2.4]{S02} is more explicit than ours in Theorem \ref{coro1}. 
In both cases $MO_*^{\mathbb{Z}/2}$ is identified with a subring of $MO_*[e_\sigma, e_\sigma^{-1}, Y_{d,\sigma}].$ Here $\sigma$ denotes the non-tri\-vial one-di\-men\-sio\-nal real re\-presentation of $\mathbb{Z}/2$. 
Also the description of $\mathfrak N_*^{\mathbb{Z}/2}$ in Theorem 2.7 of \cite{S02} is more explicit than ours, but the generators given there can be derived from the pullback property of our Theorem \ref{coro1} and Theorem 2.4 of \cite{S02}. 
\subsection{Real equivariant bordism for \texorpdfstring{$G\neq \mathbb{Z}/2\times\dots\times\mathbb{Z}/2$}{G not a product of Z/2}}\label{count}Theorem \ref{coro1} fails to be true if $G$ is not of the form $\mathbb{Z}/2\times\cdots\times\mathbb{Z}/2.$
For the complex case Hanke shows that his theorem \cite[Theorem 1]{H05} does not hold if $G$ is not of the form $S^1\times \cdots \times S^1$. He gives counterexamples for $G=\mathbb{Z}/n\times\mathbb{Z}/n$ and $G=\mathbb{Z}/n^2$ \cite[Section 4]{H05}. In the real case the situation is similar. There are different ways Theorem \ref{coro1} can fail for $G$ not of the form $\mathbb{Z}/2\times\cdots\times\mathbb{Z}/2.$ 
\begin{propo}\label{toralmono} The homomorphism 
$j_\mathfrak{N}\co \mathfrak{N}_*^G[\mathcal{A}]\to\mathfrak{N}_*^G[\mathcal{A},\mathcal{P}]$ from the Conner--Floyd exact sequence (see Section \ref{CFtDes}) is a monomorphism if and only $G=(\mathbb{Z}/2)^k$ for some $k$. 
\end{propo}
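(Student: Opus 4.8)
The plan is to analyze the Conner--Floyd exact sequence
\[\cdots\to\mathfrak{N}^G_*[\mathcal{P}]\xrightarrow{i_{\mathfrak{N}}}\mathfrak{N}^G_*[\mathcal{A}]\xrightarrow{j_{\mathfrak{N}}}\mathfrak{N}^G_*[\mathcal{A},\mathcal{P}]\xrightarrow{\partial_{\mathfrak{N}}}\mathfrak{N}^G_{*-1}[\mathcal{P}]\to\cdots\]
and observe that $j_{\mathfrak{N}}$ is a monomorphism if and only if the preceding map $i_{\mathfrak{N}}\co\mathfrak{N}^G_*[\mathcal{P}]\to\mathfrak{N}^G_*[\mathcal{A}]=\mathfrak{N}^G_*$ is the zero map, i.e.\ if and only if every $G$--manifold all of whose isotropy groups are proper subgroups is null-bordant as a $G$--manifold. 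For $G=(\mathbb{Z}/2)^k$, I would deduce this from the first part of Theorem \ref{coro1} together with the Conner--Floyd sequence: since $j_{\mathfrak{N}}$ factors as the composite of the injection $\phi_{\mathfrak{N}}$ with $\kappa_{\mathfrak{N}}^{-1}$ (Proposition \ref{kappa1}), and $\phi_{\mathfrak{N}}$ is injective by Theorem \ref{coro1}, while $\kappa_{\mathfrak{N}}$ is an isomorphism, $j_{\mathfrak{N}}$ is injective. This handles the ``if'' direction essentially for free.

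For the ``only if'' direction I would exhibit, for each finite abelian $G$ not of the form $(\mathbb{Z}/2)^k$, a nonzero class in $\mathfrak{N}^G_*[\mathcal{P}]$ that dies in $\mathfrak{N}^G_*$, i.e.\ a $G$--manifold with all isotropy proper which bounds a $G$--manifold but does not bound equivariantly through manifolds with proper isotropy. The natural source is a representation sphere: if $V$ is a nontrivial irreducible $G$--representation with $|V|>1$ (such a $V$ exists precisely when $G$ is \emph{not} a product of copies of $\mathbb{Z}/2$, since all irreducibles of $(\mathbb{Z}/2)^k$ are one-dimensional, whereas any other finite abelian $G$ has an element of order $>2$ hence a two-dimensional real irreducible), then $S(V\oplus\underline{1})$ or a suitable sphere bundle gives a candidate. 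Concretely one checks that the unit disc bundle $D(V)$ has boundary $S(V)$ with only proper isotropy, so $[S(V)]$ lies in the image of $\partial_{\mathfrak{N}}$; dually, one produces a free or almost-free $G$--manifold representing a nonzero class in $\mathfrak{N}^G_*[\mathcal{P}]$ whose underlying $G$--bordism class in $\mathfrak{N}^G_*$ vanishes. The cleanest route is to run the analogue of Hanke's counterexamples \cite[Section 4]{H05} for $G=\mathbb{Z}/4$ and $G=\mathbb{Z}/2\times\mathbb{Z}/4$ (respectively a cyclic $2$--group of order $\geq 4$ and a product involving such), transporting them through the Pontryagin--Thom map; alternatively, compare the fixed-point data via Lemma \ref{thomfix} and note that when $\dim V>1$ the index set in the wedge decomposition of $(T(\xi^G_n))^G$ behaves differently, breaking the bijection that makes $\kappa_{\mathfrak{N}}$ and $\phi_{\mathfrak{N}}$ jointly force injectivity.

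The main obstacle is the ``only if'' direction: one must genuinely construct, for the offending groups, an explicit $(\mathcal{P},\varnothing)$--manifold that is equivariantly null-bordant but not null-bordant rel proper isotropy — equivalently, show $i_{\mathfrak{N}}\neq 0$. I expect to do this by reducing to the two minimal bad cases $G=\mathbb{Z}/4$ and $G=\mathbb{Z}/2\times\mathbb{Z}/4$ (every finite abelian group not of the form $(\mathbb{Z}/2)^k$ contains one of these, or rather surjects appropriately, allowing an induction/restriction argument), and for these invoking the known failure in the literature. A secondary point requiring care is the precise bookkeeping of which direction of the exact sequence controls injectivity of $j_{\mathfrak{N}}$, and verifying that the factorization $j_{\mathfrak{N}}=\kappa_{\mathfrak{N}}^{-1}\circ\phi_{\mathfrak{N}}$ is legitimate — this is immediate from Proposition \ref{kappa1}, but should be stated explicitly to make the ``if'' direction airtight.
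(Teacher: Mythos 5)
Your reduction via exactness is the right starting point, and your ``if'' direction is fine: since $\phi_\mathfrak{N}=\kappa_\mathfrak{N}\circ j_\mathfrak{N}$ by Proposition \ref{kappa1} and $\kappa_\mathfrak{N}$ is an isomorphism, injectivity of $\phi_\mathfrak{N}$ (Theorem \ref{thee}, whose proof rests on tom Dieck's injectivity of the Pontryagin--Thom maps, not on Proposition \ref{toralmono}, so there is no circularity) gives injectivity of $j_\mathfrak{N}$. The paper instead handles both directions at once by quoting Stong: \cite[Proposition 14.2]{S70} (with one direction already in \cite[Proposition 2]{S70a}) states that $i_\mathfrak{N}\co\mathfrak{N}^G_*[\mathcal{P}]\to\mathfrak{N}^G_*[\mathcal{A}]$ is the zero map if and only if $G=(\mathbb{Z}/2)^k$, and exactness of the Conner--Floyd sequence then translates this into the statement about $j_\mathfrak{N}$.

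The genuine gap is in your ``only if'' direction, where you have the quantifier turned around. By exactness, $j_\mathfrak{N}$ fails to be a monomorphism precisely when $\im i_\mathfrak{N}\neq 0$, i.e.\ when there is a $G$--manifold with all isotropy groups proper that is \emph{not} null-bordant in $\mathfrak{N}^G_*$ (unrestricted isotropy allowed in the bounding manifold). What you propose to construct --- ``a nonzero class in $\mathfrak{N}^G_*[\mathcal{P}]$ that dies in $\mathfrak{N}^G_*$'', i.e.\ a manifold that is equivariantly null-bordant but not null-bordant through proper isotropy --- is a nonzero element of $\ker i_\mathfrak{N}$; its existence is perfectly compatible with $i_\mathfrak{N}=0$ and proves nothing about $j_\mathfrak{N}$, and it is not ``equivalent to $i_\mathfrak{N}\neq 0$'' as you assert. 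For the same reason the candidate $S(V)=\partial D(V)$ is the wrong kind of example: it visibly bounds a $G$--manifold, so $i_\mathfrak{N}[S(V)]=0$. What is actually needed is, e.g., Stong's nonzero class in $\mathfrak{N}_3^{\mathbb{Z}/4}$ lying in the image of $i_\mathfrak{N}$. Finally, your reduction to the ``minimal bad cases'' $\mathbb{Z}/4$ and $\mathbb{Z}/2\times\mathbb{Z}/4$ is incorrect as stated ($\mathbb{Z}/p$ for odd $p$ is not elementary abelian yet contains and surjects onto neither), and transferring non-vanishing of $i_\mathfrak{N}$ from a subgroup or quotient to $G$ would itself require an argument; the clean fix is simply to invoke Stong's theorem, as the paper does.
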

Together with Proposition \ref{kappa1} we immediately get the following. 
\begin{corollary}
 The homomorphism
$\phi_{\mathfrak N}\co \mathfrak N_*^G\to MO_*[e_V^{-1},Y_{d,V}]$ is a monomorphism if and only if $G=(\mathbb{Z}/2)^k$ for some $k$.
\end{corollary}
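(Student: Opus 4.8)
The plan is to convert the statement into a question about which fixed--point--free manifolds bound. By exactness of the Conner--Floyd sequence (Section \ref{CFtDes}) at $\mathfrak{N}_*^G[\mathcal{A}]$, the homomorphism $j_\mathfrak{N}$ is a monomorphism if and only if $i_\mathfrak{N}\co\mathfrak{N}_*^G[\mathcal{P}]\to\mathfrak{N}_*^G[\mathcal{A}]$ is the zero map. Since $E\mathcal{A}$ is $G$--contractible we have $\mathfrak{N}_*^G[\mathcal{A}]=\mathfrak{N}_*^G$, and $i_\mathfrak{N}$ is induced by $E\mathcal{P}\to E\mathcal{A}\simeq\pt$, so it sends the bordism class of a $\mathcal{P}$--manifold $M$ (a closed $G$--manifold with $M^G=\varnothing$) to its class in $\mathfrak{N}_*^G$. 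Thus $j_\mathfrak{N}$ is injective exactly when every fixed--point--free $G$--manifold bounds a (not necessarily fixed--point--free) $G$--manifold, and we must show this holds if and only if $G=(\mathbb{Z}/2)^k$.

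For the ``if'' direction, let $G=(\mathbb{Z}/2)^l$. Naturality of the Pontryagin--Thom transformation (Theorem \ref{PTnt}) for the map of pairs $(E\mathcal{A},\varnothing)\to(E\mathcal{A},E\mathcal{P})$ gives the commutative relation $j_{MO}\circ\PT[\mathcal{A}]=\PT[\mathcal{A},\mathcal{P}]\circ j_\mathfrak{N}$. Here $\PT[\mathcal{A}]=\PT\co\mathfrak{N}_*^G\to MO_*^G$ is a monomorphism by Theorem \ref{ptinj} applied to $(X,A)=(\pt,\varnothing)$, and $j_{MO}$ is a monomorphism because $\kappa_{MO}\circ j_{MO}=\phi_{MO}$ by Proposition \ref{kappa2}, $\kappa_{MO}$ is an isomorphism, and $\phi_{MO}$ is injective by Section \ref{fixinj}. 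Hence $j_{MO}\circ\PT[\mathcal{A}]$, and therefore $\PT[\mathcal{A},\mathcal{P}]\circ j_\mathfrak{N}$, is injective, so $j_\mathfrak{N}$ is injective. (Postcomposing with $\kappa_\mathfrak{N}$ of Proposition \ref{kappa1} then also recovers the injectivity of $\phi_\mathfrak{N}$ used in the proof of Theorem \ref{coro1}.)

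For the ``only if'' direction I would exhibit, for every finite abelian $G$ not of the form $(\mathbb{Z}/2)^k$, a $\mathcal{P}$--manifold that does not bound. First reduce to small cyclic groups: such a $G$ admits a surjection $\pi\co G\twoheadrightarrow C$ with $C=\mathbb{Z}/p$ for an odd prime $p$, or $C=\mathbb{Z}/4$. If $M$ is a $\mathcal{P}_C$--manifold that is not null--bordant over $C$, inflate it along $\pi$ to a $G$--manifold $\tilde M$ on which $N\colonequals\ker\pi$ acts trivially; then $\tilde M$ is a $\mathcal{P}_G$--manifold (the isotropy of a point is $\pi^{-1}$ of a proper subgroup of $C$), and if $\tilde M=\partial W$ for a $G$--manifold $W$, then $W^N$ is a $C$--manifold with $\partial(W^N)=(\partial W)^N=\tilde M^N=M$, contradicting $[M]\neq 0$ in $\mathfrak{N}_*^C$. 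So it suffices to treat $C=\mathbb{Z}/p$ ($p$ odd) and $C=\mathbb{Z}/4$. For $C=\mathbb{Z}/p$ take $M$ to be a single free orbit: examining $C$--manifolds of dimension one, every arc carries the trivial $C$--action (a finite group of odd order acting smoothly on a compact interval fixes the endpoints, hence acts trivially) and a free orbit of arcs contributes two free boundary orbits, so the number of free orbits in the boundary of any $C$--$1$--manifold is even; thus $M$ does not bound, while $M$ has trivial, hence proper, isotropy.

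The remaining, and principal, obstacle is $C=\mathbb{Z}/4$: here one checks that $\mathfrak{N}_0^{\mathbb{Z}/4}$ is generated by the class of a single $\mathbb{Z}/4$--fixed point, so in dimension zero every $\mathcal{P}_{\mathbb{Z}/4}$--manifold bounds and any counterexample must be positive--dimensional. I would construct one by adapting Hanke's counterexample for $G=\mathbb{Z}/n^2$ in the complex case \cite[Section 4]{H05} (the $\mathbb{Z}/n\times\mathbb{Z}/n$ counterexample has no real analogue, consistently with $(\mathbb{Z}/2)^k$ being admissible): one exploits the interplay between the Euler classes of the faithful two--dimensional representation and of the sign representation of $\mathbb{Z}/4$ to produce a fixed--point--free $\mathbb{Z}/4$--manifold whose fixed--point normal data is a boundary in $\mathfrak{N}_*^{\mathbb{Z}/4}[\mathcal{A},\mathcal{P}]$ but which is itself not a boundary in $\mathfrak{N}_*^{\mathbb{Z}/4}$ (equivalently, whose class is not killed by $\phi_\mathfrak{N}=\kappa_\mathfrak{N}\circ j_\mathfrak{N}$). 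Proving this non--triviality is the crux of the argument; it cannot be detected by restricting to a proper subgroup, since restriction to $\mathbb{Z}/2$ sends the relevant class to twice a free $\mathbb{Z}/2$--manifold, hence to zero.
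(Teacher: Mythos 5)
Your reduction of the statement to the injectivity of $j_\mathfrak{N}$ (via Proposition \ref{kappa1}, since $\phi_\mathfrak{N}=\kappa_\mathfrak{N}\circ j_\mathfrak{N}$ with $\kappa_\mathfrak{N}$ an isomorphism) is exactly the paper's route, and your ``if'' direction is correct: it is the same argument (injectivity of $\PT$ from Theorem \ref{ptinj} together with injectivity of $\phi_{MO}$ from Section \ref{fixinj}, transported through the $\kappa$--isomorphisms of Propositions \ref{kappa1} and \ref{kappa2}) that the paper itself uses in the proof of Theorem \ref{coro1}. Your reduction of the ``only if'' direction to the quotients $C=\mathbb{Z}/p$ ($p$ odd) and $C=\mathbb{Z}/4$, by inflating a non-bounding $\mathcal{P}$--manifold along $G\twoheadrightarrow C$ and taking $N$--fixed sets of a hypothetical bounding $G$--manifold, is sound, and your orbit-counting argument for odd $p$ (a single free orbit does not bound because any compact $1$--dimensional $C$--manifold has an even number of free boundary orbits) is complete and correct.

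However, there is a genuine gap: the case $C=\mathbb{Z}/4$, which is the essential one (it is the only case that occurs for $2$--groups that are not elementary abelian), is not proved. You only announce that you ``would construct'' a fixed-point-free $\mathbb{Z}/4$--manifold that is not an equivariant boundary by adapting Hanke's complex counterexample \cite[Section 4]{H05}, and you yourself identify the proof of its non-triviality as the crux; no construction or non-triviality argument is given, so your proof of the ``only if'' direction covers only groups with an element of odd order. The paper closes precisely this point by deducing the corollary from Proposition \ref{toralmono}, which rests on Stong's theorem \cite[Proposition 14.2]{S70} that $\iota_\mathfrak{N}\co\mathfrak{N}_*^G[\mathcal{P}]\to\mathfrak{N}_*^G[\mathcal{A}]$ is trivial if and only if $G=(\mathbb{Z}/2)^k$; in particular Stong exhibits an explicit non-zero class in $\mathfrak{N}_3^{\mathbb{Z}/4}$ represented by a fixed-point-free manifold, i.e.\ lying in $\ker j_\mathfrak{N}$. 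Without either reproducing such a construction together with a proof that it does not bound equivariantly, or citing Stong's result as the paper does, your argument does not establish the full equivalence.
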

\begin{proof}[Proof of Proposition \ref{toralmono}]
 Stong proves in \cite[Proposition 14.2, p.\ 75]{S70} that the map
$\iota_\mathfrak N\co \mathfrak N_*^G[\mathcal{P}]\to\mathfrak N_*^G[\mathcal{A}]$ is trivial if and only if $G=(\mathbb{Z}/2)^k$ for some $k$. One direction is proved already in \cite[Proposition 2]{S70a}. Taking this together with the Conner--Floyd exact sequence of the pair $(\mathcal{A},\mathcal{P})$ completes the proof.
\end{proof}
Stong provides an example of a non-zero element in $\mathfrak{N}_3^{\mathbb{Z}/4}$ that is mapped to zero by $j_\mathfrak{N}$.
Using similar techniques as Hanke in \cite{H05}, a counterexample can be constructed, proving for $G=\mathbb{Z}/4$ that the map 
$\iota\circ\phi_{MO}\co  MO_*^{\mathbb{Z}/4}\to MO_*[e_V,e_V^{-1},Y_{d,V}]$ fails to be injective.
\bibliographystyle{gtart}
\bibliography{./Litliste.bib}

\providecommand{\noopsort}[1]{}
\begin{thebibliography}{}
\providecommand\bibmarginpar{\leavevmode\marginpar}
\def\urlstyle#1{{\tt #1}}

\bibitem{BH72}
\textbf{T Br{\"o}cker}, \textbf{E\,C Hook}, \emph{{Stable equivariant
  bordism}}, Mathematische Zeitschrift 129 (1972) 269--277

\bibitem{C79}
\textbf{P\,E Conner}, \emph{{Differentiable periodic maps}}, volume 738 of
  \emph{Lecture Notes in Mathematics}, second edition, Springer (1979)

\bibitem{CF79}
\textbf{P\,E Conner}, \textbf{E\,E Floyd}, \emph{{Differentiable periodic
  maps}}, Ergebnisse der Mathematik und ihrer Grenzgebiete, Neue Folge, Band
  33, Springer (1964)

\bibitem{CF66}
\textbf{P\,E Conner}, \textbf{E\,E Floyd}, \emph{{Maps of odd period}}, Annals
  of Mathematics. Second Series 84 (1966) 132--156

\bibitem{tD70}
\textbf{T {\noopsort{Dieck}t}om~Dieck}, \emph{{Bordism of G-manifolds and
  integrality theorems}}, Topology 9 (1970) 345--358

\bibitem{td70a}
\textbf{T {\noopsort{Dieck}t}om~Dieck}, \emph{Fixpunkte vetauschbarer
  {I}nvolutionen}, Archiv der Mathematik 21 (1970) 295--298

\bibitem{tD71}
\textbf{T {\noopsort{Dieck}t}om~Dieck}, \emph{{Characteristic numbers of
  G-manifolds. I}}, Inventiones Mathematicae 13 (1971) 213--224

\bibitem{tD72}
\textbf{T {\noopsort{Dieck}t}om~Dieck}, \emph{{Orbittypen und äquivariante
  Homologie. I}}, Archiv der Mathematik 23 (1972) 307--317

\bibitem{tD87}
\textbf{T {\noopsort{Dieck}t}om~Dieck}, \emph{{Transformation groups}},
  volume~8 of \emph{Studies in Mathematics}, Walter de Gruyter (1987)

\bibitem{F11}
\textbf{M Firsching}, \emph{{Reeller äquivarianter Bordismus für Produkte von
  $\mathbb{Z}/2$}}, Diplomarbeit, Universität Bonn (2011)\hspace{.2em}
  {u}npublished

\bibitem{H05}
\textbf{B Hanke}, \emph{{Geometric versus homotopy theoretic equivariant
  bordism}}, Mathematische Annalen 332 (2005) 677--696

\bibitem{K96}
\textbf{S\,O Kochman}, \emph{{Bordism, stable homotopy and Adams spectral
  sequences}}, volume~7 of \emph{Fields Institute Monographs}, American
  Mathematical Society (1996)

\bibitem{LMS86}
\textbf{L\,G Lewis, Jr}, \textbf{J\,P May}, \textbf{M Steinberger},
  \emph{{Equivariant stable homotopy theory}}, volume 1213 of \emph{Lecture
  Notes in Mathematics}, Springer (1986)

\bibitem{M96}
\textbf{J\,P May}, \emph{{Equivariant Homotopy and Cohomology Theory Dedicated
  to the Memory of Robert J. Piacenza}}, {CBMS Regional Conference Series in
  Mathematics} 91, American Mathematical Society (1996)\hspace{.2em} {w}ith
  contributions by M. Cole, G. Comeza{\~n}a, S. Costenoble, A. D. Elmendorf, J.
  P. C. Greenlees, L. G. Lewis, Jr., R. J. Piacenza, G. Triantafillou, and S.
  Waner

\bibitem{M57}
\textbf{G\,D Mostow}, \emph{{Equivariant embeddings in Euclidean space}},
  Annals of Mathematics. Second Series 65 (1957) 432--446

\bibitem{O96}
\textbf{B Oliver}, \emph{{Fixed point sets and tangent bundles of actions on
  disks and Euclidean spaces}}, Topology 35 (1996) 583--615

\bibitem{P57}
\textbf{R\,S Palais}, \emph{{Imbedding of compact, differentiable
  transformation groups in orthogonal representations}}, Journal of Mathematics
  and Mechanics 6 (1957) 673--678

\bibitem{Se68}
\textbf{G Segal}, \emph{{Equivariant $K$-theory}}, Publications
  Math{\'e}matiques de l'IH{\'E}S 34 (1968) 129--151

\bibitem{S77}
\textbf{J-P Serre}, \emph{Linear representations of finite groups},
  Springer-Verlag (1977)Translated from the second French edition by Leonard L.
  Scott, Graduate Texts in Mathematics, Vol. 42

\bibitem{S01}
\textbf{D Sinha}, \emph{{Computations of complex equivariant bordism rings}},
  American Journal of Mathematics 123 (2001) 577--605

\bibitem{S02}
\textbf{D Sinha}, \emph{{Real equivariant bordism and stable transversality
  obstructions for $\mathbb{Z}/2$}}, Proceedings of the American Mathematical
  Society 130 (2002) 271--281

\bibitem{S70a}
\textbf{R\,E Stong}, \emph{{Equivariant bordism and
  $(\mathbb{Z}/2)^k$-actions}}, Duke Mathematical Journal 37 (1970) 779--785

\bibitem{S70}
\textbf{R\,E Stong}, \emph{{Unoriented bordism and actions of finite groups}},
  volume 103 of \emph{Memoirs of the American Mathematical Society}, American
  Mathematical Society (1970)

\bibitem{T54}
\textbf{R Thom}, \emph{{Quelques propri{\'e}t{\'e}s globales des
  vari{\'e}t{\'e}s diff{\'e}rentiables}}, Commentarii Mathematici Helvetici 28
  (1954) 17--86

\bibitem{W69}
\textbf{A\,G Wasserman}, \emph{{Equivariant differential topology}}, Topology 8
  (1969) 127--150

\end{thebibliography}

\end{document}